\documentclass[pdflatex,sn-mathphys,Numbered]{sn-jnl}

\usepackage{graphicx}%
\usepackage{float}
\usepackage{amsmath,amssymb,amsfonts}%
\usepackage{amsthm}%
\usepackage{mathrsfs}%
\usepackage[title]{appendix}%
\usepackage{xcolor}%
\usepackage{textcomp}%
\usepackage{manyfoot}%
\usepackage{booktabs}%

\numberwithin{equation}{section}

\theoremstyle{thmstyleone}%
\newtheorem{theorem}{Theorem}[section]%
\newtheorem{proposition}[theorem]{Proposition}%
\newtheorem{lemma}[theorem]{Lemma}%
\newtheorem{corollary}[theorem]{Corollary}%

\theoremstyle{thmstylethree}%
\newcommand{\R}{\mathbb{R}}
\newcommand{\E}{\mathbb{E}}
\newcommand{\dist}{\operatorname{dist}}
\newcommand{\Leb}{\operatorname{Leb}}
\newcommand{\Dgm}{\operatorname{Dgm}}
\newcommand{\card}{\operatorname{card}}
\newcommand{\rank}{\operatorname{rank}}
\newcommand{\eqd}{\overset{d}{=}}
\newcommand{\norm}[1]{\lVert #1 \rVert}

\begin{document}

\title[Persistent Homology of the Planar Wiener Sausage]{Persistent Homology of the Planar Wiener Sausage: Brownian Scaling and a Logarithmic Expectation Law}

\author*[1]{\fnm{Tristan} \sur{Guillaume}}\email{tristan.guillaume@cyu.fr}

\affil*[1]{\orgdiv{Laboratoire Thema}, \orgname{CY Cergy Paris Universit\'{e}}, \orgaddress{\street{33 boulevard du port}, \city{Cergy-Pontoise Cedex}, \postcode{F-95011}, \country{France}}}

\abstract{We study degree-one persistent homology of the planar Wiener-sausage filtration generated by standard Brownian motion without drift. In the drifted case, regeneration along the drift direction leads to linear-in-time laws for persistent-homological observables. In the recurrent zero-drift case, this renewal structure disappears. The organizing mechanism is instead Brownian self-similarity: the persistence diagram at time $T$ is equal in law to the image of the unit-time diagram under spatial dilation by $\sqrt{T}$. Consequently, large-time questions on fixed radius windows are transformed into small-radius questions for the unit-time Brownian trace.

Let $B$ be standard planar Brownian motion, let $K_{T}=B([0,T])$, and let $K_{T}^{(r)}$ be the radius-$r$ Wiener sausage. Since $K_{T}^{(r)}$ is connected, its first Betti number $\beta_{1}^{T}(r)$ is the number of bounded complementary components of $K_{T}^{(r)}$. For a bounded nonnegative Borel function $\psi$ supported in a compact interval $[r_{0},r_{1}]\subset(0,\infty)$, we consider the smoothed Betti-curve observable
\[
\Phi_{\psi}(T)=\int_{r_{0}}^{r_{1}}\beta_{1}^{T}(r)\,\psi(r)\,dr.
\]
We prove that there exist absolute constants $0<c_{-}\le c_{+}<\infty$ such that
\[
c_{-}\int_{r_{0}}^{r_{1}}\frac{\psi(r)}{r^{2}}\,dr
\le\liminf_{T\to\infty}\frac{\E[\Phi_{\psi}(T)]}{T/(\log T)^{2}}
\le\limsup_{T\to\infty}\frac{\E[\Phi_{\psi}(T)]}{T/(\log T)^{2}}
\le c_{+}\int_{r_{0}}^{r_{1}}\frac{\psi(r)}{r^{2}}\,dr.
\]
Thus recurrent planar persistent topology lives, at the expectation level and after smoothing over a compact radius window, on the scale $T/(\log T)^{2}$, one logarithmic factor below the planar Wiener-sausage area scale $T/\log T$. The proof combines an exact diagram-level scaling identity, deterministic comparisons between Betti curves and Brownian cavity counts, and fixed-radius geometric estimates for planar Wiener sausages. The result identifies the correct first-order scale and the scaling-forced radius profile, while leaving the sharp constant and probability-level fluctuations as natural open problems.}

\keywords{Wiener sausage, planar Brownian motion, persistent homology, Betti numbers, Brownian scaling, logarithmic asymptotics, random geometry}

\pacs[MSC Classification]{60D05, 55N31, 60J65, 60F15}

\maketitle

\section{Introduction}\label{sec:intro}

The geometry of planar Brownian motion has been a central topic in probability theory since the classical works of Spitzer \cite{ref21}, Donsker and Varadhan \cite{ref3}, and Le Gall \cite{ref14,ref15,ref16}. Given a standard planar Brownian motion $B=(B_{t})_{t\ge0}$ in $\R^{2}$, its range up to time $T$,
\[
K_{T}:=B([0,T]),
\]
is a compact connected random subset of the plane. For $r>0$, the associated Wiener sausage is
\begin{equation}\label{eq:1.1}
K_{T}^{(r)}=\{x\in\R^{2}:\dist(x,B[0,T])\le r\}.
\end{equation}
where, for a given set $A$,
\[
\dist(x,A):=\inf_{y\in A}\norm{x-y}.
\]
The large-time behavior of $K_{T}^{(r)}$, and the fine structure of the complement of the Brownian path, are classical objects of planar stochastic geometry. In dimension two, recurrence produces logarithmic effects which are absent in transient dimensions and which make the planar Wiener sausage a particularly intricate object.

This paper studies a topological observable of the planar Wiener sausage: the number of holes of $K_{T}^{(r)}$. Since $K_{T}$ is connected, every offset $K_{T}^{(r)}$ is connected as well. Therefore its first Betti number has the concrete planar interpretation:
\[
\beta_{1}^{T}(r)=\beta_{1}\bigl(K_{T}^{(r)}\bigr)=\#\{\text{bounded connected components of }\R^{2}\setminus K_{T}^{(r)}\}.
\]
Thus, although the language of the paper uses persistent homology, the basic probabilistic object is simple: we count bounded complementary components of a planar Brownian sausage.

The persistent-homology viewpoint enters by considering all radii simultaneously. The family
\[
\mathcal{F}_{T}=\bigl(K_{T}^{(r)}\bigr)_{r\ge0}
\]
is an increasing filtration of compact planar sets. Its degree-one persistence diagram, denoted $\Dgm_{1}(K_{T})$, records the radii at which holes are created and filled. Equivalently, the Betti curve $r\mapsto\beta_{1}^{T}(r)$ counts the number of degree-one persistence intervals alive at radius $r$. Persistent homology was introduced in the computational-topology literature by Edelsbrunner, Letscher and Zomorodian \cite{ref4}, and its stability theory was developed by Cohen-Steiner, Edelsbrunner and Harer \cite{ref2}. In the present paper, however, persistence is used as a multiscale language for a Brownian random set; the main estimates come from Brownian scaling and planar Wiener-sausage geometry.

For a bounded Borel function $\psi$ with compact support in $(0,\infty)$, we study the smoothed Betti-curve observable
\[
\Phi_{\psi}(T)=\int_{0}^{\infty}\beta_{1}^{T}(r)\psi(r)\,dr.
\]
The restriction to this class of observables is deliberate. It is not merely a single-scale Betti statistic: by the alive-count representation of persistence diagrams, $\Phi_{\psi}(T)$ is a genuine linear statistic of the degree-one persistence diagram. At the same time, it reduces the probabilistic problem to fixed-radius hole counts, which are directly connected with the classical geometry of planar Wiener sausages and Brownian complementary components. Thus the Betti-curve class is a natural first testing class for an expectation-level recurrent theory: it is multiscale and genuinely persistent, but still compatible with Brownian scaling and fixed-radius Wiener-sausage estimates.

The purpose of the paper is to identify the first-order expectation scale of $\Phi_{\psi}(T)$ in the recurrent planar case. This is not a straightforward extension of the drifted case studied in \cite{ref7,ref8}. When Brownian motion has a nonzero drift, the path moves through fresh space in the drift direction. A regeneration scheme can then be used to obtain linear-in-time laws for smoothed degree-one persistence observables. Without drift, planar Brownian motion is recurrent. The path repeatedly returns to previously visited regions, and the renewal architecture of \cite{ref7,ref8} disappears, replaced by Brownian self-similarity. Indeed, since
\[
K_{T}\eqd\sqrt{T}\,K_{1},
\]
the entire Wiener-sausage filtration at time $T$ is equal in law to the unit-time filtration dilated by the factor $\sqrt{T}$. At the level of degree-one persistence diagrams, this gives the exact scaling identity
\[
\Dgm_{1}\bigl(K_{T}^{(r)}\bigr)\eqd D_{\sqrt{T}}\Dgm_{1}\bigl(K_{1}^{(r)}\bigr),
\]
where the dilation operator $D$ is defined by
\[
D_{\lambda}(b,d):=(\lambda b,\lambda d).
\]
Equivalently, for the Betti curve,
\[
\beta_{1}^{T}(r)\eqd\beta_{1}^{1}\!\left(\frac{r}{\sqrt{T}}\right).
\]
Thus a large-time problem on a fixed radius window is equivalent to a small-scale problem for the unit-time Brownian trace near radius zero.

This scaling relation is the organizing principle of the recurrent theory. The classical planar Wiener sausage satisfies, for fixed $r>0$,
\[
\E\bigl[\,\bigl|K_{T}^{(r)}\bigr|\,\bigr]\asymp\frac{T}{\log T},
\]
up to the usual dimensional constants. This area scale gives a natural upper benchmark for the amount of geometric complexity of the sausage. The main result of the present paper shows that the persistent topological observable is smaller by one logarithmic factor: its correct expectation scale is
\[
\frac{T}{(\log T)^{2}}.
\]
More precisely, we prove the following two-sided expectation theorem. For every bounded nonnegative Borel function $\psi$ supported in $[r_{0},r_{1}]\subset(0,\infty)$, there exist absolute constants $0<c_{-}<c_{+}<\infty$ such that
\[
c_{-}\int_{r_{0}}^{r_{1}}\frac{\psi(r)}{r^{2}}\,dr
\le\liminf_{T\to\infty}\frac{\E[\Phi_{\psi}(T)]}{T/(\log T)^{2}}
\le\limsup_{T\to\infty}\frac{\E[\Phi_{\psi}(T)]}{T/(\log T)^{2}}
\le c_{+}\int_{r_{0}}^{r_{1}}\frac{\psi(r)}{r^{2}}\,dr.
\]
Thus the number of persistent holes of the recurrent planar Wiener sausage lives, after smoothing over a compact radius window, on the scale $T/(\log T)^{2}$, rather than on the Wiener-sausage area scale $T/\log T$. The radius profile $r^{-2}$ is forced by Brownian scaling. Indeed, if a sharp fixed-radius asymptotic of the form
\[
\E[\beta_{1}^{T}(r)]\sim f(r)\frac{T}{(\log T)^{2}}
\]
exists, then self-similarity implies that necessarily
\[
f(r)=\frac{\kappa}{r^{2}}
\]
for a single absolute constant $\kappa$.

The proof has three main components. First, we establish the exact diagram-level Brownian-scaling identity for the degree-one persistence diagram of the Wiener-sausage filtration. This translates large-time questions for $K_{T}$ into small-radius questions for the unit-time Brownian trace $K_{1}$. Second, we compare Betti curves with cavity counts. A bounded complementary component of the Brownian trace contributes to $\beta_{1}^{T}(r)$ only through its ability to survive thickening by radius $r$, a condition closer to inradius than to area. Third, we prove fixed-radius upper and lower bounds at the corrected scale $T/(\log T)^{2}$. The upper bound uses deterministic perimeter estimates for planar parallel sets together with expected boundary-length estimates for the Wiener sausage, in the spirit of Rataj--Schmidt--Spodarev \cite{ref20} and Last \cite{ref13}. The lower bound uses the cavity-count comparison, Brownian scaling, Le Gall's area asymptotics for planar Brownian complementary components \cite{ref16}, and Werner's empirical shape theorem \cite{ref24}. Integrating the resulting fixed-radius estimates over the compact radius window gives the smoothed expectation theorem (Theorem~\ref{thm:smoothed}).

There exists a broad literature on persistence diagrams associated with random objects. Much of it concerns random point clouds or random complexes; for example, Hiraoka, Shirai and Trinh \cite{ref9} prove limit theorems for persistence diagrams of stationary point processes. Other works study persistence for stochastic processes, especially in one dimension and for sublevel-set filtrations. Baryshnikov \cite{ref1} studies persistence diagrams for Brownian motions with drift in one dimension, while Perez \cite{ref18} relates persistence of one-dimensional semimartingales to quadratic variation and local time, and Thomas \cite{ref23} proves convergence results for persistence diagrams of stationary discrete-time processes. The present paper is different in nature: the input is not a point cloud and not a one-dimensional sublevel filtration, but the offset filtration of a recurrent continuous planar path.

A closer background to our approach is the study of the complement of planar Brownian motion. Mountford \cite{ref17} analyzes the number of complementary connected components created by planar Brownian motion whose area lies in a small interval, while Le Gall \cite{ref16} obtains the almost-sure asymptotic behavior of the number of components with area larger than a small threshold. These works identify the fundamental small-hole scale, but they count holes by area. The persistent-homology observable here is controlled by offset radius: a hole contributes to $\beta_{1}^{T}(r)$ only if it survives thickening by radius $r$. Large area alone is insufficient; a component may be long and thin and still fail to contain a ball of radius $r$. This is why inradius and shape information become relevant. The work of Holden, Nacu, Peres and Salisbury \cite{ref10}, building on Werner's asymptotic shape theory \cite{ref24}, is particularly relevant to this aspect of the argument.

Honzl's work \cite{ref11} may look even closer to the present paper, since it studies connected components of the complement of the planar Wiener sausage rather than only those of the raw Brownian path. If $\chi(\gamma)$ denotes the Euler characteristic of the radius-$\gamma$ Wiener sausage, Honzl uses the identity
\[
\chi(\gamma)=1-N_{\gamma}[0,\infty),
\]
where $N_{\gamma}[0,\infty)$ counts the bounded complementary components of the sausage of radius $\gamma$. However, Honzl's theorem applies in a regime where the sausage radius is strictly smaller than the natural linear scale of the components being counted. Fixed-radius Betti counts, after scaling to unit time, correspond instead to a sausage radius comparable to the linear scale of the holes being detected. The present work therefore does not merely apply Honzl's theorem; it enters the critical-window Euler-characteristic regime identified there as open.

This paper's main result is deliberately formulated at the expectation level. It does not assert convergence in probability, and it does not identify the sharp fixed-radius constant. Brownian scaling shows that if a sharp constant exists, then the radius profile must be $r^{-2}$. Determining this constant, and proving probability-level or almost-sure laws, remain natural open problems. Unlike in the drifted setting of \cite{ref7,ref8}, these questions cannot rely on regeneration and will require a genuinely recurrent mechanism, perhaps involving scale decompositions, local times, or renormalized self-intersection local times.

The paper is organized as follows. Section~\ref{sec:setup} introduces the Wiener-sausage filtration, the degree-one persistence diagram, the Betti-curve test class, the diagram-level Brownian-scaling identity, and the deterministic coarea-area estimate inherited from \cite{ref7}. Section~\ref{sec:cavity} develops the cavity-count comparison and the deterministic lower bound for Betti curves. Section~\ref{sec:fixedradius} proves the fixed-radius two-sided expectation theorem at scale $T/(\log T)^{2}$. Section~\ref{sec:smoothed} integrates the fixed-radius estimates over compact radius windows and proves the smoothed expectation theorem for $\Phi_{\psi}(T)$. Section~\ref{sec:open} briefly discusses open problems. Appendix~\ref{secA1} provides a decomposition of the sharp-constant problem.

\section{Setup and preliminaries}\label{sec:setup}

Throughout the paper, homology is taken with coefficients in a fixed field $k$. We work in the birth--death plane and parametrize offset filtrations by the radius $r\ge0$.

\subsection{Offsets and Wiener sausages}\label{subsec:offsets}

Let $A\subset\R^{d}$ be nonempty and compact. For $r\ge0$, its closed $r$-offset is
\[
A^{(r)}:=\{x\in\R^{d}:\dist(x,A)\le r\},
\]
The family
\[
\mathcal{F}(A):=\bigl(A^{(r)}\bigr)_{r\ge0}
\]
is the offset filtration generated by $A$.

If $X=(X_{t})_{t\ge0}$ is a continuous $\R^{d}$-valued path and $T>0$, we write
\[
K_{T}:=X([0,T])
\]
for its range up to time $T$. The corresponding Wiener-sausage filtration is
\[
\mathcal{F}_{T}:=\mathcal{F}(K_{T})=\bigl(K_{T}^{(r)}\bigr)_{r\ge0}.
\]
Equivalently,
\[
K_{T}^{(r)}=\bigcup_{0\le t\le T}\overline{B}(X_{t},r),
\]
where $\overline{B}(x,r)$ denotes the closed Euclidean ball of radius $r$ centered at $x$.

In the present paper we specialize to standard planar Brownian motion $B=(B_{t})_{t\ge0}$ in $\R^{2}$, i.e. we set
\begin{equation}\label{eq:2.1}
K_{T}:=B([0,T]),\qquad T\ge0.
\end{equation}
For fixed $r>0$, the random compact set $K_{T}^{(r)}$ is the planar Wiener sausage of radius $r$ up to time $T$, as defined by \eqref{eq:1.1}.

Since $K_{T}$ is the continuous image of a connected interval, it is connected. Hence every offset $K_{T}^{(r)}$ is connected as well. In the plane, this implies that the only nontrivial Betti number of $K_{T}^{(r)}$ beyond degree $0$ is the first one. We write
\[
\beta_{1}\bigl(K_{T}^{(r)}\bigr):=\rank H_{1}\bigl(K_{T}^{(r)}\bigr),
\]
which is the number of holes of the radius-$r$ Wiener sausage.

\subsection{Persistence diagrams and counting measures}\label{subsec:persistence}

Let $A\subset\R^{d}$ be nonempty and compact, and let $\mathcal{F}(A)$ be the offset filtration introduced in Section~\ref{subsec:offsets}. For $q\ge0$, the degree-$q$ persistent homology module associated with $A$ is the functor
\[
r\mapsto H_{q}\bigl(A^{(r)};k\bigr),
\]
with transition maps induced by the inclusions $A^{(r)}\subset A^{(s)}$ whenever $0\le r\le s$. We suppress the coefficient field $k$ from the notation when no ambiguity can arise. Whenever this module is tame, it admits a persistence diagram. We denote this diagram by
\[
\Dgm_{q}(A):=\Dgm_{q}\bigl(\mathcal{F}(A)\bigr).
\]
It is a locally finite multiset in the birth--death plane
\[
\Delta:=\{(b,d)\in[0,\infty)^{2}:b<d\}.
\]
For Brownian motion up to time $T$, we use the shorthand
\[
\Dgm_{q}(K_{T}):=\Dgm_{q}(\mathcal{F}_{T})=\Dgm_{q}\Bigl(\bigl(K_{T}^{(r)}\bigr)_{r\ge0}\Bigr).
\]
For completeness, let us recall why the offset filtrations used below fall within the usual $q$-tame framework. Let $K\subset\R^{2}$ be compact. For every $0<u<v$, choose a finite set $P\subset K$ such that the Hausdorff distance between $P$ and $K$ is smaller than some $\delta<(v-u)/2$. Then
\[
K^{(u)}\subset P^{(u+\delta)}\subset K^{(v)}.
\]
Since $P^{(u+\delta)}$ is a finite union of closed Euclidean disks, its homology is finite-dimensional. Hence the inclusion-induced map
\[
H_{q}\bigl(K^{(u)}\bigr)\to H_{q}\bigl(K^{(v)}\bigr)
\]
has finite rank. Thus the offset persistence module of every compact planar set is $q$-tame, and its persistence diagram is locally finite away from the diagonal. In particular, for the Brownian trace $K_{T}=B([0,T])$, which is almost surely compact, the degree-one persistence diagram of the Wiener-sausage filtration $\mathcal{F}_{T}$ is well-defined.

We shall also use the standard measurability of these objects. The map
\[
K\mapsto\dist(\,\cdot\,,K)
\]
is continuous from compact subsets of $\R^{2}$, endowed with the Hausdorff metric, to continuous functions endowed with local uniform convergence. Together with the stability theorem for persistence diagrams, this implies the usual measurability of the diagram-valued map
\[
K\mapsto\Dgm_{1}(K).
\]
Consequently, the persistence counting measure, the Betti curve, and the smoothed observables considered below are measurable random objects when $K=K_{T}$.

The persistence counting measure associated with $\Dgm_{q}(A)$ is
\[
\mu_{A}^{(q)}:=\sum_{x\in\Dgm_{q}(A)}m(x)\,\delta_{x},
\]
where $m(x)$ denotes the multiplicity of the point $x$. For Brownian motion up to time $T$, we write
\[
\mu_{T}^{(q)}:=\mu_{K_{T}}^{(q)}.
\]
In the planar case considered here, the relevant degree is $q=1$, and we abbreviate
\[
\mu_{T}:=\mu_{T}^{(1)}.
\]
For $r\ge0$, the corresponding Betti curve is
\[
\beta_{1}^{T}(r):=\beta_{1}\bigl(K_{T}^{(r)}\bigr)=\rank H_{1}\bigl(K_{T}^{(r)}\bigr).
\]
By the standard relationship between persistence diagrams and alive counts,
\[
\beta_{1}^{T}(r)=\mu_{T}\bigl(\{(b,d)\in\Delta:b\le r<d\}\bigr).
\]
Thus $\beta_{1}^{T}(r)$ counts the number of degree-one persistence intervals of the Wiener-sausage filtration $\mathcal{F}_{T}$ that are alive at radius $r$.

\subsection{Betti-curve test functions and smoothed observables}\label{subsec:betti}

Let $\psi:[0,\infty)\to\R$ be bounded and Borel, with compact support. Define the associated test function on the birth--death plane by
\[
\phi_{\psi}(b,d):=\int_{b}^{d}\psi(r)\,dr.
\]
For the degree-one persistence counting measure $\mu_{T}$, we then define the smoothed observable
\[
\Phi_{\psi}(T):=\int_{\Delta}\phi_{\psi}(b,d)\,\mu_{T}(db\,dd).
\]
From now on, we fix a compact radius window
\[
[r_{0},r_{1}]\subset(0,\infty),\qquad 0<r_{0}<r_{1}<\infty.
\]
By Fubini's theorem and the alive-count representation of the Betti curve,
\begin{equation}\label{eq:2.2}
\Phi_{\psi}(T)=\int_{r_{0}}^{r_{1}}\beta_{1}^{T}(r)\,\psi(r)\,dr.
\end{equation}
This is the basic observable of the paper. The advantage of this class is twofold. First, it retains a direct persistence-theoretic interpretation through the birth--death test function $\phi_{\psi}$. Second, it reduces the problem to fixed-radius hole counts, which is exactly the form in which the classical Wiener-sausage geometry enters later. The smoothing by a bounded compactly supported weight $\psi$ is also natural analytically. A single-radius count is sensitive to local fluctuations in birth and death radii, whereas integration over a fixed radius window produces a more robust statistic.

\subsection{Diagram-level Brownian scaling}\label{subsec:scaling}

The structural input specific to the zero-drift planar case is Brownian scaling. The probabilistic scaling itself is classical; the point here is that it yields an exact diagram-level identity for the degree-one persistence diagram of the Wiener-sausage filtration.

\begin{proposition}[Diagram-level Brownian scaling]\label{prop:brownscaling}
For every $T>0$,
\[
\Dgm_{1}(K_{T})\eqd\sqrt{T}\,\Dgm_{1}(K_{1}),
\]
where the dilation operator $D$ acts on the birth--death plane by
\[
D_{\sqrt{T}}(b,d):=\bigl(\sqrt{T}\,b,\sqrt{T}\,d\bigr).
\]
Equivalently,
\[
\mu_{T}\eqd\bigl(D_{\sqrt{T}}\bigr)_{\#}\mu_{1}.
\]
\end{proposition}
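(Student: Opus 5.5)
The plan is to combine a deterministic equivariance statement for offset filtrations under dilation with the classical Brownian scaling of the range, and to pass from one to the other through the measurability of the diagram map recalled in Section~\ref{subsec:persistence}.

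\textbf{Step 1: deterministic equivariance.} Fix a compact set $K\subset\R^{2}$ and $\lambda>0$. Since $\dist(x,\lambda K)=\lambda\,\dist(x/\lambda,K)$, we have
\[
(\lambda K)^{(r)}=\lambda\,K^{(r/\lambda)}\qquad\text{for all } r\ge0.
\]
The dilation $x\mapsto\lambda x$ is a homeomorphism of $\R^{2}$ and commutes with the inclusions. Hence it induces an isomorphism of persistence modules between $r\mapsto H_{1}((\lambda K)^{(r)})$ and the reparametrized module $r\mapsto H_{1}(K^{(r/\lambda)})$. A module reparametrized by the increasing bijection $r\mapsto\lambda r$ of $[0,\infty)$ has its interval decomposition transported by that bijection. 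Applied to the $q$-tame modules recalled earlier, this means the decorated diagram, or equivalently the rank function $\rank\bigl(H_{1}(A^{(u)})\to H_{1}(A^{(v)})\bigr)$ that determines it, is transported as well. This gives
\[
\Dgm_{1}(\lambda K)=D_{\lambda}\Dgm_{1}(K),\qquad \mu_{\lambda K}=(D_{\lambda})_{\#}\mu_{K}.
\]
To avoid any subtlety with interval endpoints, I would argue at the level of rank functions, which is what defines the diagram of a $q$-tame module.

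\textbf{Step 2: probabilistic scaling.} The process $(\sqrt{T}B_{t/T})_{t\ge0}$ is a standard planar Brownian motion. Therefore
\[
K_{T}=B([0,T])\eqd\sqrt{T}\,B([0,1])=\sqrt{T}\,K_{1}
\]
as random compact sets, that is, as random elements of the space of compact sets with the Hausdorff metric. This holds because both sets are images of $C([0,1],\R^{2})$ under the continuous map $\omega\mapsto\omega([0,1])$, applied to paths with the same law.

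\textbf{Step 3: transfer.} The map $K\mapsto\Dgm_{1}(K)$ is measurable, as recalled via the stability theorem. Applying it to the equality in law from Step~2 gives $\Dgm_{1}(K_{T})\eqd\Dgm_{1}(\sqrt{T}K_{1})$. Step~1 then identifies the right-hand side pointwise with $D_{\sqrt{T}}\Dgm_{1}(K_{1})$. The statement for $\mu_{T}$ follows in the same way, and so does the Betti-curve corollary $\beta_{1}^{T}(r)\eqd\beta_{1}^{1}(r/\sqrt{T})$.

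The main point needing care is Step~1 in the $q$-tame, possibly non-finitely-presented setting. I would handle it by noting that the persistence diagram is defined purely from the rank invariants over rectangles. Those invariants transform exactly under the reparametrization $r\mapsto\lambda r$, so no decomposition theorem is needed.
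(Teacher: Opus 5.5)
Your proof is correct and follows the same route as the paper: Brownian scaling $K_T\eqd\sqrt{T}K_1$, combined with the dilation identity $(\lambda K)^{(r)}=\lambda K^{(r/\lambda)}$, which carries each birth and death radius to $\sqrt{T}$ times itself. Your version is more careful than the paper's sketch in three places. You take the equality in law at the level of the random compact set, so the whole filtration is matched rather than each fixed-radius marginal. You pass it through the measurable diagram map. You justify the transport of the diagram in the $q$-tame setting via rank invariants.
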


\begin{proof}
By Brownian scaling,
\begin{equation}\label{eq:2.3}
K_{T}\eqd\sqrt{T}\,K_{1}.
\end{equation}
Therefore, for every $r\ge0$,
\begin{equation}\label{eq:2.4}
K_{T}^{(r)}\eqd\sqrt{T}\,K_{1}^{(r/\sqrt{T})}.
\end{equation}
Thus a degree-one persistence class born at radius $b$ and dying at radius $d$ for $K_{1}$ is transported under the dilation $x\mapsto\sqrt{T}\,x$ to a class born at radius $\sqrt{T}\,b$ and dying at radius $\sqrt{T}\,d$ for $K_{T}$. This yields the claimed identity for persistence diagrams and counting measures.
\end{proof}

Proposition~\ref{prop:brownscaling} immediately translates large-time statements into small-scale statements for the unit-time path. If we define, for $\varepsilon>0$,
\[
H_{\psi}(\varepsilon):=\int_{r_{0}}^{r_{1}}\beta_{1}\bigl(K_{1}^{(\varepsilon r)}\bigr)\,\psi(r)\,dr,
\]
then Proposition~\ref{prop:brownscaling} gives
\begin{equation}\label{eq:2.5}
\Phi_{\psi}(T)\eqd H_{\psi}\bigl(T^{-1/2}\bigr).
\end{equation}
Thus every large-$T$ statement for $\Phi_{\psi}(T)$ may be read equivalently as a small-$\varepsilon$ statement for the unit-time Brownian trace.

\subsection{Deterministic reduction to sausage area}\label{subsec:area}

We conclude this preliminary section by recalling the deterministic geometric inequality from \cite{ref7} that provides the global upper control for the smoothed Betti observable.

For a compact planar set $A$, define
\[
I_{A}[r_{0},r_{1}]:=\int_{r_{0}}^{r_{1}}\beta_{1}^{A}(r)\,dr.
\]
The coarea argument in \cite{ref7} yields
\[
I_{A}[r_{0},r_{1}]\le\frac{\Leb_{2}\bigl(A^{(r_{1})}\bigr)}{2\pi r_{0}}\,.
\]
where $\Leb_{2}(\cdot)$ denotes planar Lebesgue measure.

More generally, if $\psi$ is bounded and supported in $[r_{0},r_{1}]$, then
\[
\left|\int_{r_{0}}^{r_{1}}\beta_{1}^{A}(r)\,\psi(r)\,dr\right|
\le\frac{\norm{\psi}_{\infty}}{2\pi r_{0}}\,\Leb_{2}\bigl(A^{(r_{1})}\bigr).
\]
Applied to $A=K_{T}$, this gives
\begin{equation}\label{eq:2.6}
\bigl|\Phi_{\psi}(T)\bigr|\le\frac{\norm{\psi}_{\infty}}{2\pi r_{0}}\Leb_{2}\bigl(K_{T}^{(r_{1})}\bigr)\,.
\end{equation}
The role of this estimate in the recurrent paper is purely that of a global domination bound. Since
\[
\E\bigl[\Leb_{2}\bigl(K_{T}^{(r_{1})}\bigr)\bigr]\asymp\frac{T}{\log T}\,,
\]
\eqref{eq:2.6} implies that the smoothed topological observable cannot grow faster than the Wiener-sausage area scale. It does not determine the true order of magnitude of the hole count. The fixed-radius analysis of Section~\ref{sec:fixedradius} shows that the topological observable actually lives on the smaller scale
\[
\frac{T}{r^{2}(\log T)^{2}},
\]
and therefore the correct normalization for the smoothed expectation theorem is
\[
\frac{T}{(\log T)^{2}}.
\]
In the present paper, the estimate \eqref{eq:2.6} is used chiefly as a deterministic upper benchmark, and a domination tool when passing from fixed-radius estimates to compact radius windows.

\section{Cavity counts and deterministic comparisons}\label{sec:cavity}

The purpose of this section is to introduce a deterministic lower geometric proxy for the Betti curve of the planar Wiener sausage, by relating the holes of the offset filtration to bounded connected components of the complement of the underlying trace. The natural object for this purpose is the cavity-count function.

Throughout this section, $A\subset\R^{2}$ denotes a nonempty compact connected set. We write
\[
\mathcal{C}(A):=\{\text{bounded connected components of }\R^{2}\setminus A\}.
\]

\subsection{Holes as superlevel components of the distance function}\label{subsec:superlevel}

By definition,
\[
A^{(r)}=\{x\in\R^{2}:\dist(x,A)\le r\},\qquad\R^{2}\setminus A^{(r)}=\{x\in\R^{2}:\dist(x,A)>r\}.
\]
Since $A$ is connected, every offset $A^{(r)}$ is connected. Therefore its first Betti number counts the bounded connected components of its complement.

\begin{proposition}[Holes as superlevel components]\label{prop:superlevel}
For every $r>0$,
\begin{equation}\label{eq:3.1}
\beta_{1}\bigl(A^{(r)}\bigr)=\#\bigl\{\text{bounded connected components of }\{x\in\R^{2}:\dist(x,A)>r\}\bigr\}.
\end{equation}
\end{proposition}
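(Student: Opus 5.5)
The plan is to identify $\rank H_{1}(A^{(r)};k)$ with the number of bounded components of the open set $U_r:=\R^{2}\setminus A^{(r)}=\{x:\dist(x,A)>r\}$ via Alexander duality in the plane. The nontrivial input is that $A^{(r)}$ is a reasonable compact set: it is connected (because $A$ is), compact, and, for $r>0$, locally contractible enough that singular homology agrees with Čech homology. Concretely, I would first observe that $\R^{2}\setminus A^{(r)}$ is open and has exactly one unbounded component, since $A^{(r)}$ is bounded. Alexander duality for a compact subset $X\subset S^{2}=\R^{2}\cup\{\infty\}$ gives $\check H^{1}(X;k)\cong\tilde H_{0}(S^{2}\setminus X;k)$. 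The right-hand side has rank equal to the number of components of $S^{2}\setminus X$ minus one, which is the number of bounded components of $\R^{2}\setminus X$. Taking $X=A^{(r)}$, this count is the right-hand side of \eqref{eq:3.1}, counted as possibly infinite.

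The key step is to replace the Čech cohomology $\check H^{1}(A^{(r)})$ by $\rank H_{1}(A^{(r)})$ as used in Section~\ref{subsec:persistence}. Over a field, $\check H^{1}$ is dual to Čech homology, so if the ranks are finite they agree. It therefore suffices to know that singular and Čech theories coincide on $A^{(r)}$ for $r>0$. I would obtain this from the positive reach of the complement side. Every point $x$ with $\dist(x,A)\le r$ can be connected to $A$ by a segment to a nearest point, and this segment stays in $A^{(r)}$. More usefully, the sets $A^{(r)}$ for $r>0$ are compact sets whose boundary is locally a Lipschitz curve away from finitely many singularities in any compact region; this is the classical regularity of planar parallel sets (Fu). Such sets are ANRs, and for compact ANRs the Čech and singular theories agree.

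If citing Fu's regularity is undesirable, I would use a direct alternative based on the tameness argument of Section~\ref{subsec:persistence}. Sandwich $A^{(r-\delta)}\subset P^{(r-\delta/2)}\subset A^{(r)}\subset P^{(r+\delta/2)}\subset A^{(r+\delta)}$ with $P$ finite, where finite unions of disks satisfy Alexander duality in the elementary form. One then counts complementary components in the limit $\delta\to0$, using that bounded components of $\{\dist>r\}$ are the increasing union over $\delta$ of components of $\{\dist>r+\delta\}$. This alternative needs more care about the convention of persistence at $r$ versus the limit, so I prefer the ANR route.

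The main obstacle is exactly this identification of singular homology of the offset with the component count, in particular excluding pathological behaviour of $A^{(r)}$ and handling the possibly infinite case. For the infinite case, I would note that if $U_r$ has infinitely many bounded components, then the boundaries of these components produce infinitely many independent loops in $A^{(r)}$, each separating a point of one component from $\infty$, so both sides are infinite. In practice, for $r>0$ and compact $A$ the count is finite, since each bounded component of $U_r$ contains a point at distance greater than $r$ from $A$, and the coarea bound of Section~\ref{subsec:area} limits such components. The finiteness together with the ANR property then closes the argument.
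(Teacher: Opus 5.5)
The paper's proof is a one-line appeal to the planar fact that, for a compact connected set, $\beta_1$ counts the bounded complementary components. Your route is the right way to make that rigorous: Alexander duality $\check H^{1}(A^{(r)};k)\cong\tilde H_{0}(S^{2}\setminus A^{(r)};k)$, followed by \v{C}ech $=$ singular. You are also right that compactness and connectedness alone do not suffice; a Warsaw circle has trivial singular $H_1$ but a bounded complementary domain.

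However, the two facts you use to close the argument fail at some radii. First, Fu's theorem gives a Lipschitz-manifold boundary only when $r$ is a regular value of $\dist(\cdot,A)$. In the plane that is all $r$ off a Lebesgue-null set, but the theorem says nothing at critical values, and there your ``finitely many singularities in every compact region'' picture can fail. Second, your finiteness claim is false. The coarea bound of Section~\ref{subsec:area} controls $\int_{r_0}^{r_1}\beta_1^A(r)\,dr$, so it gives $\beta_1^A(r)<\infty$ only for almost every $r$. Unlike the cavities in Lemma~\ref{lem:finite}, a bounded component of $\{\dist(\cdot,A)>r\}$ need not contain a ball of any fixed size: around $x$ it contains only $B(x,\dist(x,A)-r)$.

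Here is a counterexample to both claims. Let $0<r<1/2$, and let $f\ge0$ be continuous on $[-1,1]$ and vanish exactly at $0$ and $\pm1/n$ (e.g.\ $f(x)=|x|\sin^{2}(\pi/x)$). Set $A=([-1,1]\times\{0\})\cup\{(x,2r+f(x)):|x|\le1\}\cup(\{-1\}\times[0,2r])$, which is compact and connected.
\begin{itemize}
\item Each segment $\{1/n\}\times[0,2r]$ lies in $A^{(r)}$.
\item For $1/(n+1)<x<1/n$ and small $\delta>0$, one has $\dist((x,r+\delta),A)>r$.
\item Hence $\{\dist(\cdot,A)>r\}$ has infinitely many bounded components: lenses of diameter $O(1/n)$ accumulating at $(0,r)$.
\item $\partial A^{(r)}$ has infinitely many pinch points $(1/n,r)$, and $A^{(r)}$ is not an ANR, since its $H_1$ is infinite-dimensional.
\end{itemize}
Infinite concatenations of the shrinking lens boundaries even give uncountably many independent singular classes. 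So \eqref{eq:3.1} can only be read in $\{0,1,2,\dots\}\cup\{\infty\}$, and your ``both sides infinite'' treatment is the right reading.

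To close the finite case you need an argument that works at critical radii. One option:
\begin{itemize}
\item $A^{(r)}$ is always locally path-connected for $r>0$. If $q_n\to p$ in $A^{(r)}$ with $|q_n-a_n|\le r$ and $a_n\in A$, pass to a subsequence $a_n\to a\in A$, so $|p-a|\le r$.
\item Join $q_n$ to $p$ by two segments, lying in $\overline{B}(a_n,r)$ and $\overline{B}(a,r)$ respectively, through the point of $[p,a]$ at distance $\min(|a_n-a|,|p-a|)$ from $p$. The resulting path has diameter $O(|q_n-p|+|a_n-a|)$.
\item A locally connected plane continuum with finitely many complementary domains is an ANR (Borsuk). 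Your duality argument then goes through.
\end{itemize}
Alternatively, restrict to non-critical $r$, where Fu's theorem does apply. That proves the identity only for almost every $r$.
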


\begin{proof}
Because $A^{(r)}$ is compact and connected in the plane, $\beta_{1}(A^{(r)})$ equals the number of bounded connected components of $\R^{2}\setminus A^{(r)}$. Since
\[
\R^{2}\setminus A^{(r)}=\{\dist(x,A)>r\},
\]
the claim follows.
\end{proof}

In the Brownian case, this gives the concrete interpretation
\begin{equation}\label{eq:3.2}
\beta_{1}^{T}(r)=\#\bigl\{\text{bounded connected components of }\{x:\dist(x,K_{T})>r\}\bigr\}.
\end{equation}

\subsection{Cavity radii and cavity counts}\label{subsec:cavityradii}

For $U\in\mathcal{C}(A)$, define its cavity radius by
\begin{equation}\label{eq:3.3}
\rho_{A}(U):=\sup_{x\in U}\dist(x,A)\in(0,\infty).
\end{equation}
Geometrically, $\rho_{A}(U)$ is the inradius of the cavity $U$ relative to the trace $A$.

We then define the cavity-count function
\begin{equation}\label{eq:3.4}
N_{A}(r):=\#\{U\in\mathcal{C}(A):\rho_{A}(U)>r\},\qquad r>0.
\end{equation}
Thus $N_{A}(r)$ counts those bounded complementary components of $A$ that contain a point at distance strictly greater than $r$ from the trace.

The first point is that this quantity is finite at every positive scale.

\begin{lemma}[Finiteness at positive scales]\label{lem:finite}
For every compact connected $A\subset\R^{2}$ and every $r>0$,
\begin{equation}\label{eq:3.5}
N_{A}(r)<\infty.
\end{equation}
\end{lemma}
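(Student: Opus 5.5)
The plan is a packing argument. Each cavity counted by $N_{A}(r)$ contains a disjoint ball of radius $r$, and all these balls lie in a bounded region. Fix $r>0$ and let $U_{1},\dots,U_{m}$ be any finite collection of distinct elements of $\mathcal{C}(A)$ with $\rho_{A}(U_{i})>r$. It suffices to bound $m$ by a quantity depending only on $A$ and $r$, since then the set $\{U\in\mathcal{C}(A):\rho_{A}(U)>r\}$ cannot be infinite.

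\textbf{Step 1: each cavity contains a ball of radius $r$.} By the definition \eqref{eq:3.3} of the supremum, for each $i$ there is a point $x_{i}\in U_{i}$ with $\dist(x_{i},A)>r$. The open ball $B(x_{i},r)$ then does not meet $A$. It is connected and contains $x_{i}\in U_{i}$. Since $U_{i}$ is a connected component of $\R^{2}\setminus A$, it follows that $B(x_{i},r)\subset U_{i}$.

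\textbf{Step 2: the balls are disjoint and confined to a bounded set.} Distinct components are disjoint, so the balls $B(x_{i},r)$ are pairwise disjoint. Every bounded complementary component of a compact set lies in the convex hull of $A$. Indeed, if a point lies outside the compact convex hull, it can be joined to $\infty$ by a ray that avoids $A$, so its component is unbounded. Hence each ball lies in $\operatorname{conv}(A)$, which is contained in a disk $\overline{B}(0,R)$ with $R<\infty$ because $A$ is compact.

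\textbf{Step 3: compare areas.} Using Step 1 and Step 2,
\[
m\,\pi r^{2}=\sum_{i=1}^{m}\Leb_{2}\bigl(B(x_{i},r)\bigr)\le\Leb_{2}\bigl(\overline{B}(0,R)\bigr)=\pi R^{2}.
\]
Therefore $m\le R^{2}/r^{2}$, and \eqref{eq:3.5} follows.

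The only mildly delicate point is the containment $U\subset\operatorname{conv}(A)$ in Step 2. As a fallback, the balls can instead be bounded directly by Proposition~\ref{prop:superlevel}-type reasoning: each $x_{i}$ lies in some bounded component of $\R^{2}\setminus A$, and any point farther than $\operatorname{diam}(A)$ from $A$ lies in the unbounded component. Then $B(x_{i},r)\subset A^{(\operatorname{diam}A)}$, which is also a bounded set, and the same area comparison applies. Connectedness of $A$ is not actually needed here.
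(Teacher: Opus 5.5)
Your proposal is correct and is essentially the paper's proof: each counted cavity contains an open ball of radius $r$, these balls are pairwise disjoint and lie in the convex hull of $A$, and a bounded planar region holds only finitely many such balls. You additionally make the steps explicit (the ray argument for $U\subset\operatorname{conv}(A)$ and the quantitative bound $N_{A}(r)\le R^{2}/r^{2}$), and your remark that connectedness of $A$ is not needed is also correct.
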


\begin{proof}
If $U\in\mathcal{C}(A)$ satisfies $\rho_{A}(U)>r$, then there exists $x_{U}\in U$ with $\dist(x_{U},A)>r$. Hence the open ball $B(x_{U},r)$ is contained in $U$. Distinct cavities are disjoint, so the corresponding balls are disjoint as well. Since every bounded cavity lies in the convex hull of $A$, all such balls are contained in a fixed bounded region. A bounded planar region can contain only finitely many disjoint open balls of radius $r$.
\end{proof}

The cavity-count function is the scale-compatible lower proxy for the Betti curve.

\begin{proposition}[Cavity-count lower bound]\label{prop:cavitylower}
For every compact connected $A\subset\R^{2}$ and every $r>0$,
\begin{equation}\label{eq:3.6}
N_{A}(r)\le\beta_{1}\bigl(A^{(r)}\bigr).
\end{equation}
\end{proposition}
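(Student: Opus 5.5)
The plan is to build an injection from the set of cavities counted by $N_{A}(r)$ into the set of bounded connected components of $\{x:\dist(x,A)>r\}$, and then invoke Proposition~\ref{prop:superlevel}. Since $N_{A}(r)<\infty$ by Lemma~\ref{lem:finite}, the injection gives the inequality directly.

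First I construct the map. Fix $U\in\mathcal{C}(A)$ with $\rho_{A}(U)>r$ and choose $x_{U}\in U$ with $\dist(x_{U},A)>r$. Let $V_{U}$ be the connected component of $\{\dist(\cdot,A)>r\}$ containing $x_{U}$. The set $\{\dist(\cdot,A)>r\}$ is open, so $V_{U}$ is open and connected.

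Second, I show $V_{U}\subset U$. Every point of $V_{U}$ has positive distance to $A$, so $V_{U}\subset\R^{2}\setminus A$. Because $V_{U}$ is connected and contains $x_{U}\in U$, it lies in the component $U$ of $\R^{2}\setminus A$. Two consequences follow. Since $U$ is bounded, $V_{U}$ is a bounded component, so it is counted in \eqref{eq:3.1}. The choice of $x_{U}$ only affects which point we start from, not the fact that the resulting component sits inside $U$; this is all injectivity needs.

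Third, I check injectivity. If $U\neq U'$, then $V_{U}\subset U$ and $V_{U'}\subset U'$ with $U\cap U'=\emptyset$. Components are nonempty, so $V_{U}\neq V_{U'}$. This yields $N_{A}(r)\le\#\{\text{bounded components of }\{\dist(\cdot,A)>r\}\}$, and the right-hand side equals $\beta_{1}(A^{(r)})$ by Proposition~\ref{prop:superlevel}.

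The only step needing care is the second one, where boundedness of $V_{U}$ follows from the containment $V_{U}\subset U$ rather than from any direct estimate. I do not expect a real obstacle: the argument is purely point-set topological and uses only that distinct complementary components of $A$ are disjoint. In general the inequality can be strict, since a single cavity may split into several superlevel components.
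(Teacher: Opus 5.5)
Your proof is correct and follows the paper's argument: both send each cavity $U$ with $\rho_{A}(U)>r$ to the component of $\{\dist(\cdot,A)>r\}$ containing a chosen point $x_{U}$, and both conclude via Proposition~\ref{prop:superlevel}. The paper leaves implicit why that component is bounded and why distinct cavities give distinct components; your observation that the component lies inside $U$ makes both points explicit.
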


\begin{proof}
Let $U\in\mathcal{C}(A)$ satisfy $\rho_{A}(U)>r$. Then there exists $x_{U}\in U$ with $\dist(x_{U},A)>r$. Therefore $x_{U}$ belongs to a bounded connected component of the superlevel set $\{\dist(x,A)>r\}$. If $U\neq V$ are two distinct cavities with $\rho_{A}(U)>r$ and $\rho_{A}(V)>r$, then $x_{U}$ and $x_{V}$ lie in different bounded connected components of $\{\dist(x,A)>r\}$. By Proposition~\ref{prop:superlevel}, the number of such bounded connected components is exactly $\beta_{1}(A^{(r)})$. Hence $N_{A}(r)\le\beta_{1}(A^{(r)})$.
\end{proof}

For Brownian motion up to time $T$, we write
\[
N_{T}(r):=N_{K_{T}}(r),\qquad r>0.
\]
Then Proposition~\ref{prop:cavitylower} yields
\begin{equation}\label{eq:3.7}
N_{T}(r)\le\beta_{1}^{T}(r)\qquad\text{for every }r>0.
\end{equation}
This is the key deterministic lower comparison of the recurrent paper.

\begin{figure}[H]
\centering
\includegraphics[width=\textwidth]{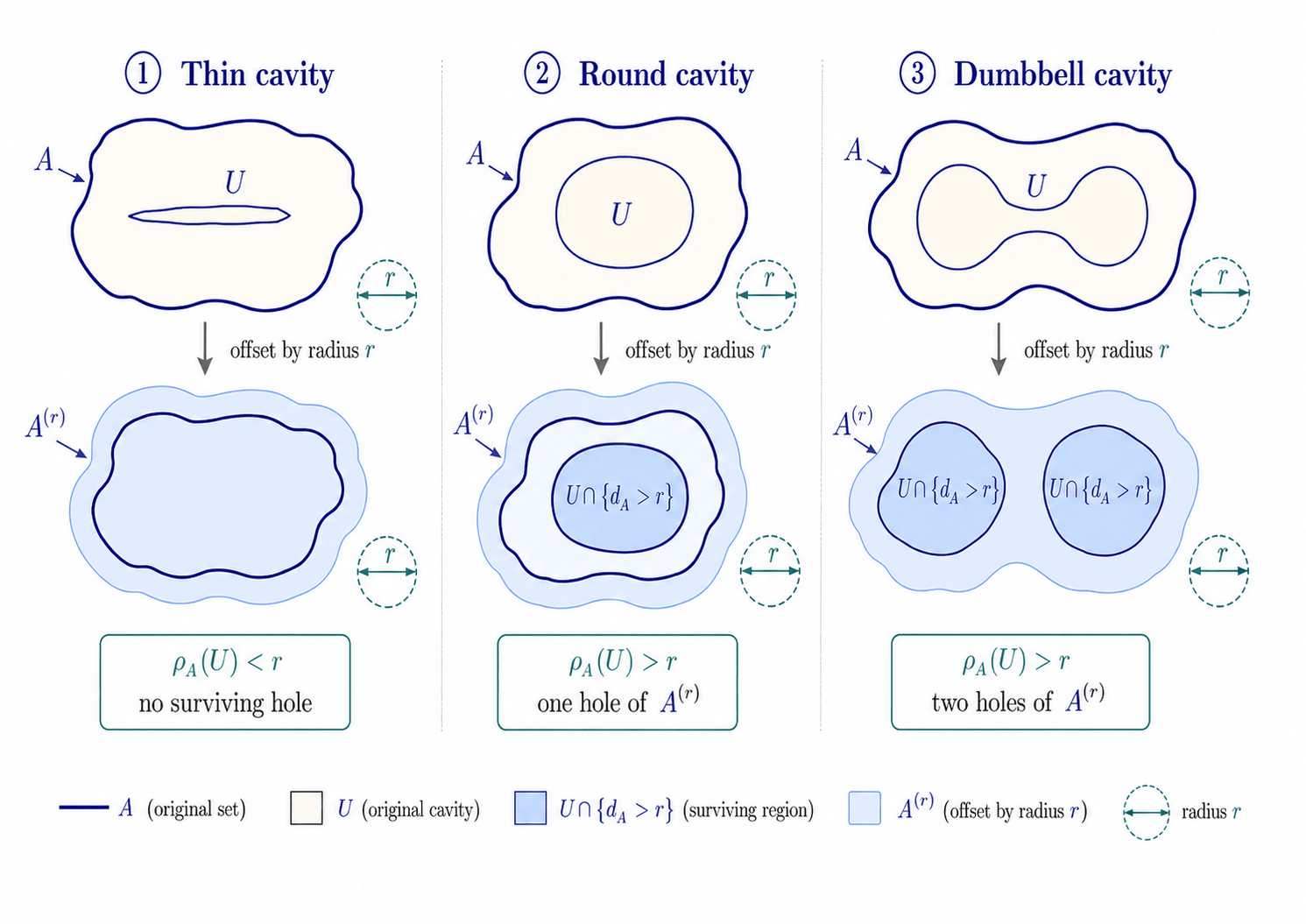}
\caption{\textbf{Original cavities need not correspond one-to-one to holes at radius $r$.} Let $\{d_{A}>r\}:=\{x\in\R^{2}:\dist(x,A)>r\}$. Then, for a bounded complementary component $U$ of $A$, the surviving region $U\cap\{d_{A}>r\}$ may be empty, connected, or disconnected. Hence the cavity count $N_{A}(r)$ is a lower bound for $\beta_{1}\bigl(A^{(r)}\bigr)$, but not generally equal to it.}\label{fig:cavities}
\end{figure}

\subsection{Scaling of cavity counts}\label{subsec:cavityscaling}

The cavity-count function is naturally compatible with Brownian scaling.

\begin{proposition}[Scaling of cavity radii and cavity counts]\label{prop:cavityscaling}
Let $A\subset\R^{2}$ be compact and connected, and let $a>0$. Then
\begin{equation}\label{eq:3.8}
\mathcal{C}(aA)=\{aU:U\in\mathcal{C}(A)\},
\end{equation}
and
\begin{equation}\label{eq:3.9}
\rho_{aA}(aU)=a\,\rho_{A}(U)\qquad\text{for every }U\in\mathcal{C}(A).
\end{equation}
Consequently,
\begin{equation}\label{eq:3.10}
N_{aA}(r)=N_{A}(r/a),\qquad r>0.
\end{equation}
\end{proposition}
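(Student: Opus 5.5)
The plan is to use that the dilation $h_a(x)=ax$ is a homeomorphism of $\R^{2}$ that multiplies all distances by $a$. I will first prove \eqref{eq:3.8}, then \eqref{eq:3.9}, and read off \eqref{eq:3.10} by a change of variables in the threshold.

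For \eqref{eq:3.8}, observe that $h_a$ maps $\R^{2}\setminus A$ bijectively and homeomorphically onto $\R^{2}\setminus aA$. Homeomorphisms carry connected components onto connected components, so the components of $\R^{2}\setminus aA$ are exactly the sets $aU$, where $U$ ranges over the components of $\R^{2}\setminus A$. Since $h_a$ and $h_a^{-1}=h_{1/a}$ both preserve boundedness, $aU$ is bounded if and only if $U$ is. This gives $\mathcal{C}(aA)=\{aU:U\in\mathcal{C}(A)\}$, and the correspondence $U\mapsto aU$ is a bijection between $\mathcal{C}(A)$ and $\mathcal{C}(aA)$. Note also that $aA$ is again compact and connected, so the definitions apply to it.

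For \eqref{eq:3.9}, I will use the homogeneity of the distance function,
\[
\dist(ax,aA)=\inf_{y\in A}\norm{ax-ay}=a\,\dist(x,A).
\]
Taking the supremum over $x\in U$, which is the same as taking it over $ax\in aU$, gives $\rho_{aA}(aU)=a\,\rho_A(U)$.

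Finally, for \eqref{eq:3.10}, I combine the bijection with the scaling of radii. Since $a>0$, the condition $\rho_{aA}(aU)>r$ is equivalent to $\rho_A(U)>r/a$. The map $U\mapsto aU$ therefore restricts to a bijection
\[
\{U\in\mathcal{C}(A):\rho_A(U)>r/a\}\longrightarrow\{V\in\mathcal{C}(aA):\rho_{aA}(V)>r\}.
\]
Counting both sides yields $N_{aA}(r)=N_A(r/a)$. Both counts are finite by Lemma~\ref{lem:finite}, although a bijection gives equality of cardinalities in any case.

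The only point needing any care is the bijection of bounded components, and it is routine topology, so I do not expect a genuine obstacle.
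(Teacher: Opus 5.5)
Your proposal is correct and follows essentially the same route as the paper. The dilation $x\mapsto ax$ is a homeomorphism carrying bounded complementary components of $A$ bijectively onto those of $aA$, the homogeneity $\dist(ax,aA)=a\,\dist(x,A)$ gives the radius scaling, and the count identity is read off from the definitions; you simply spell out the boundedness and threshold-equivalence steps that the paper leaves implicit.
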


\begin{proof}
The dilation $x\mapsto ax$ is a homeomorphism of $\R^{2}$, so it sends bounded connected components of $\R^{2}\setminus A$ bijectively onto bounded connected components of $\R^{2}\setminus aA$. This proves \eqref{eq:3.8}. Moreover,
\[
\dist(ax,aA)=a\,\dist(x,A),
\]
hence
\[
\rho_{aA}(aU)=\sup_{y\in aU}\dist(y,aA)=\sup_{x\in U}a\,\dist(x,A)=a\,\rho_{A}(U),
\]
which is \eqref{eq:3.9}. Formula \eqref{eq:3.10} follows immediately from the definition of $N_{A}(r)$.
\end{proof}

Applying Proposition~\ref{prop:cavityscaling} with $A=K_{1}$ and $a=\sqrt{T}$, and using Brownian scaling $K_{T}\eqd\sqrt{T}\,K_{1}$, we obtain:

\begin{corollary}[Brownian scaling for cavity counts]\label{cor:cavitybrownscaling}
For every $T>0$ and every $r>0$,
\begin{equation}\label{eq:3.11}
N_{T}(r)\eqd N_{1}\!\left(\frac{r}{\sqrt{T}}\right).
\end{equation}
\end{corollary}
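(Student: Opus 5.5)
The plan is to obtain the identity in law by transporting the deterministic scaling relation \eqref{eq:3.10} of Proposition~\ref{prop:cavityscaling} through the Brownian scaling identity \eqref{eq:2.3}. Write $N(A,r):=N_{A}(r)$ for a compact connected planar set $A$. Then $N_{T}(r)=N(K_{T},r)$ is a fixed functional of the random compact set $K_{T}$. The point is that an equality in law $K_{T}\eqd\sqrt{T}\,K_{1}$ of random compact sets transfers to $F(K_{T})\eqd F(\sqrt{T}\,K_{1})$ for every measurable functional $F$ on the hyperspace of compact sets, endowed with the Hausdorff metric.

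\textbf{Step 1 (the scaling law).} First I state \eqref{eq:2.3} precisely. The process $(\sqrt{T}\,B_{t})_{t\in[0,1]}$ has the same law as $(B_{Tt})_{t\in[0,1]}$ on $C([0,1],\R^{2})$. The range map $\omega\mapsto\omega([0,1])$ is continuous, in fact $1$-Lipschitz, from the uniform topology to the Hausdorff metric. It therefore pushes this path identity forward to the identity in law $K_{T}\eqd\sqrt{T}\,K_{1}$ of random compact sets.

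\textbf{Step 2 (measurability).} The main technical point is to check that $A\mapsto N_{A}(r)$ is Borel for fixed $r>0$; I expect this to be the only genuine obstacle. The easiest route is to note that $N_{A}(r)$ counts those bounded components of $\R^{2}\setminus A$ that meet the open set $\{\dist(\cdot,A)>r\}$.

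One can then approximate from below using countable data, namely points of $\mathbb{Q}^{2}$ and rational radii $s>r$. The quantity $N_{A}(r)$ is the supremum, over finite sets $\{q_{1},\dots,q_{m}\}\subset\mathbb{Q}^{2}$ and rational $s>r$, of the largest $m$ such that the following hold:
\begin{itemize}
\item $\dist(q_{i},A)>s$ for each $i$;
\item each $q_{i}$ is separated from infinity by $A$;
\item the $q_{i}$ lie in pairwise distinct components of $\R^{2}\setminus A$.
\end{itemize}

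Each of these conditions is Borel in $A$. Separation of two points by a compact set can be expressed through countably many polygonal paths with rational vertices, since components of the open set $\R^{2}\setminus A$ are polygonally path-connected. "Connected by a polygonal path avoiding $A$" is an open condition in the Hausdorff topology, and separation is its complement.

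Alternatively, I could simply invoke the same measurability convention the paper adopted for $\Dgm_{1}$, via the continuity of $K\mapsto\dist(\cdot,K)$.

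\textbf{Step 3 (conclusion).} With measurability in hand, apply the Borel functional $F(A):=N_{A}(r)$ to both sides of the identity from Step~1. This gives $N_{T}(r)=F(K_{T})\eqd F(\sqrt{T}\,K_{1})=N_{\sqrt{T}K_{1}}(r)$. Since $K_{1}$ is almost surely compact and connected, \eqref{eq:3.10} with $a=\sqrt{T}$ yields, pathwise, $N_{\sqrt{T}K_{1}}(r)=N_{K_{1}}(r/\sqrt{T})=N_{1}(r/\sqrt{T})$. This proves \eqref{eq:3.11}.

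The same argument applied to the finite vector $(N(\cdot,r_{1}),\dots,N(\cdot,r_{k}))$ gives the identity jointly in $r$, should it be needed later.
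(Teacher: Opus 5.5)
Your proposal is correct and follows the paper's route: apply the deterministic dilation identity \eqref{eq:3.10} with $A=K_{1}$ and $a=\sqrt{T}$, and combine it with $K_{T}\eqd\sqrt{T}\,K_{1}$. The paper does this in one line. Your check that $A\mapsto N_{A}(r)$ is Borel (via rational points, rational polygonal paths, and openness of path-avoidance in the Hausdorff topology) is valid, and it fills in a measurability step the paper leaves implicit.
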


Thus cavity counts obey the same large-time / small-scale reduction as the persistence diagram itself.

\subsection{Weighted cavity comparison}\label{subsec:weightedcavity}

Let $\psi$ be a bounded Borel function supported in a compact interval $[r_{0},r_{1}]\subset(0,\infty)$, and assume for the moment that $\psi\ge0$. Define the weighted cavity-count functional
\begin{equation}\label{eq:3.12}
\Psi_{\psi}(T):=\int_{r_{0}}^{r_{1}}N_{T}(r)\,\psi(r)\,dr.
\end{equation}
Then Proposition~\ref{prop:cavitylower} immediately yields the following.

\begin{corollary}[Weighted cavity lower bound]\label{cor:weightedcavity}
Let $\psi$ be bounded, nonnegative, and supported in $[r_{0},r_{1}]$. Then
\begin{equation}\label{eq:3.13}
0\le\Psi_{\psi}(T)\le\Phi_{\psi}(T)\qquad\text{for every }T>0,
\end{equation}
almost surely.
\end{corollary}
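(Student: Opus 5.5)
The plan is to integrate the pointwise comparison \eqref{eq:3.7} against the nonnegative weight $\psi$. Two things need checking first: that both integrals in \eqref{eq:3.13} make sense as random variables, and that they are finite almost surely.

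First, I will fix an outcome in the full-probability event on which $K_{T}=B([0,T])$ is a nonempty compact connected subset of $\R^{2}$. On this event, Proposition~\ref{prop:cavitylower} gives $N_{T}(r)\le\beta_{1}^{T}(r)$ for every $r>0$. Lemma~\ref{lem:finite} shows $N_{T}(r)<\infty$, and trivially $N_{T}(r)\ge0$. Since $\psi\ge0$ and $\psi$ vanishes outside $[r_{0},r_{1}]$, it follows that
\[
0\le N_{T}(r)\psi(r)\le\beta_{1}^{T}(r)\psi(r),\qquad r\in[r_{0},r_{1}].
\]

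Second, I will check that the integrals are well defined. The function $r\mapsto N_{T}(r)$ is nonincreasing in $r$: if $r<r'$, every cavity with $\rho>r'$ also has $\rho>r$. Hence it is Borel measurable, and $r\mapsto N_{T}(r)\psi(r)$ is a nonnegative Borel function. The Betti curve $r\mapsto\beta_{1}^{T}(r)$ is measurable by the alive-count representation; it is a count of diagram points in $\{b\le r<d\}$, hence a sum of indicator functions of $r$. Moreover, $\Phi_{\psi}(T)$ is finite, since by \eqref{eq:2.6} it is bounded by $\norm{\psi}_{\infty}(2\pi r_{0})^{-1}\Leb_{2}(K_{T}^{(r_{1})})<\infty$.

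Monotonicity of the Lebesgue integral then yields $0\le\Psi_{\psi}(T)\le\Phi_{\psi}(T)$ on this event, which gives \eqref{eq:3.13} almost surely.

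The only step needing any care is the joint measurability in $(\omega,r)$ required to call $\Psi_{\psi}(T)$ a random variable. I would handle it by monotonicity. Because $N_{T}(r)$ is nonincreasing in $r$, it is determined by its values at rational $r$ up to right or left limits, so measurability in $\omega$ for each fixed $r$ suffices. Fixed-$r$ measurability could be obtained by approximating cavities via the finite-union-of-disks construction used for tameness. For the inequality itself, however, measurability is not needed: the comparison holds pathwise.
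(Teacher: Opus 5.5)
Your proof is correct and is essentially the paper's argument: take the pathwise bound $N_{T}(r)\le\beta_{1}^{T}(r)$ from \eqref{eq:3.7}, multiply by $\psi\ge0$, and integrate over $[r_{0},r_{1}]$. The measurability and finiteness checks you add (monotonicity of $r\mapsto N_{T}(r)$, finiteness via \eqref{eq:2.6}) are not in the paper and are not needed for the inequality, which holds pathwise in $[0,\infty]$, but they do no harm.
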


\begin{proof}
By \eqref{eq:3.7},
\[
N_{T}(r)\le\beta_{1}^{T}(r)\qquad\text{for every }r>0.
\]
Multiplying by the nonnegative weight $\psi(r)$ and integrating over $[r_{0},r_{1}]$ yields \eqref{eq:3.13}.
\end{proof}

Combining Corollary~\ref{cor:weightedcavity} with the deterministic coarea estimate \eqref{eq:2.6} gives a deterministic sandwich:
\begin{equation}\label{eq:3.14}
0\le\Psi_{\psi}(T)\le\Phi_{\psi}(T)\le\frac{\norm{\psi}_{\infty}}{2\pi r_{0}}\,\Leb_{2}\bigl(K_{T}^{(r_{1})}\bigr)\qquad\text{a.s.}
\end{equation}
This relation shows that the smoothed persistence observable lies between:
\begin{itemize}
\item a lower geometric proxy built from bounded complementary components of the trace, and
\item the global area upper bound.
\end{itemize}
The two roles are different. The area estimate provides only a ceiling and does not identify the true scale. By contrast, the cavity-count side is scale-compatible with the fixed-radius theorem and ties the topological problem directly to the small-hole geometry of planar Brownian motion.

\subsection{Role of cavity counts in the recurrent theory}\label{subsec:roleofcavity}

The deterministic geometry must connect the Betti curve to the fine complementary structure of the trace itself. The cavity-count function $N_{T}(r)$ is the simplest deterministic object that performs this role: it counts bounded complementary components of the trace large enough to survive at level $r$, and it obeys exactly the same Brownian scaling as the Wiener sausage.

At the same time, $N_{T}(r)$ is only a lower proxy. It is not, in general, equal to $\beta_{1}^{T}(r)$, since a single bounded complementary component of the trace may contribute more than one bounded connected component to the superlevel set $\{d_{K_{T}}>r\}$. Thus cavity counts are robust enough to control the correct order of magnitude, but too coarse to identify the exact constant by themselves. This is why the fixed-radius theorem of the next section must also use the finer geometric information coming from planar Wiener-sausage geometry.

\section{Fixed-radius hole counts and the corrected scale}\label{sec:fixedradius}

This section has three steps. First, we show that Brownian scaling reduces any possible fixed-radius asymptotic profile to the form $r^{-2}$. Second, we obtain an upper bound at the corrected scale from the perimeter of the Wiener sausage. Third, we obtain a lower bound at the same scale from the cavity-count comparison of Section~\ref{sec:cavity}, Le Gall's area asymptotics for Brownian complementary components, and Werner's empirical shape theorem.

Throughout this section, we use the notation defined in Section~\ref{sec:setup} and write $\mathcal{H}^{1}$ for one-dimensional Hausdorff measure; in particular,
\[
\mathcal{H}^{1}\bigl(\partial K_{T}^{(r)}\bigr)
\]
denotes the perimeter of the planar Wiener sausage whenever this boundary length is finite. We also rely repeatedly on the Brownian scaling identities \eqref{eq:2.3} and \eqref{eq:2.4}.

\subsection{Scaling reduction of the fixed-radius profile}\label{subsec:scalingreduction}

We begin with the scaling observation, which is independent of the detailed geometry.

\begin{proposition}\label{prop:scalingreduction}
Suppose that there exist a normalization $a(T)$ and a nonnegative function $f$ on $(0,\infty)$ such that, for every fixed $r>0$,
\begin{equation}\label{eq:4.1}
\frac{\E[\beta_{1}^{T}(r)]}{a(T)}\to f(r)\qquad\text{as }T\to\infty,
\end{equation}
and suppose that
\[
a(T)=T\ell(T),
\]
where $\ell$ is slowly varying at infinity. Then there exists an absolute constant $\kappa\ge0$ such that
\begin{equation}\label{eq:4.2}
f(r)=\frac{\kappa}{r^{2}}.
\end{equation}
\end{proposition}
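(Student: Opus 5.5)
The plan is to combine the Betti-curve form of Proposition~\ref{prop:brownscaling}, namely $\beta_{1}^{T}(r)\eqd\beta_{1}^{1}(r/\sqrt{T})$, with the slow variation of $\ell$. Fix $r>0$ and $\lambda>0$. Applying the scaling identity once at time $T$ and once at time $\lambda^{2}T$ gives the equality in law
\[
\beta_{1}^{T}(r)\eqd\beta_{1}^{1}\bigl(r/\sqrt{T}\bigr)=\beta_{1}^{1}\bigl(\lambda r/\sqrt{\lambda^{2}T}\bigr)\eqd\beta_{1}^{\lambda^{2}T}(\lambda r).
\]
Equivalently, this follows by applying \eqref{eq:2.4} directly: $K^{(\lambda r)}_{\lambda^{2}T}\eqd\lambda K^{(r)}_{T}$, and a dilation preserves the number of bounded complementary components. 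Hence
\[
\E\bigl[\beta_{1}^{\lambda^{2}T}(\lambda r)\bigr]=\E\bigl[\beta_{1}^{T}(r)\bigr]\qquad\text{for all }T>0.
\]
The expectations may be $+\infty$ a priori. However, the hypothesis \eqref{eq:4.1} forces them to be finite for all large $T$, and this is all we need.

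Next, divide both sides by $a(\lambda^{2}T)=\lambda^{2}T\,\ell(\lambda^{2}T)$ and write
\[
\frac{\E[\beta_{1}^{\lambda^{2}T}(\lambda r)]}{a(\lambda^{2}T)}
=\frac{\E[\beta_{1}^{T}(r)]}{a(T)}\cdot\frac{a(T)}{a(\lambda^{2}T)}
=\frac{\E[\beta_{1}^{T}(r)]}{a(T)}\cdot\frac{1}{\lambda^{2}}\cdot\frac{\ell(T)}{\ell(\lambda^{2}T)}.
\]
Now let $T\to\infty$. The left side tends to $f(\lambda r)$ by \eqref{eq:4.1}, applied at radius $\lambda r$ along the sequence $\lambda^{2}T\to\infty$. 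On the right, the first factor tends to $f(r)$, and $\ell(T)/\ell(\lambda^{2}T)\to1$ because $\ell$ is slowly varying. Therefore
\[
f(\lambda r)=\lambda^{-2}f(r)\qquad\text{for all }r,\lambda>0.
\]

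Finally, take $r=1$ and $\lambda=s$ to get $f(s)=f(1)/s^{2}$ for every $s>0$. Setting $\kappa:=f(1)\ge0$ then gives \eqref{eq:4.2}. The constant is absolute because it depends neither on $r$ nor on any auxiliary parameter: the law of $K_{1}$ is fixed. It can depend only on the choice of normalization $a$, which is fixed in the hypothesis.

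No step here is a real obstacle. The only point needing care is the first: the scaling must act jointly on time and radius, not on one alone, and one must check that the equality in law of the random sets transfers to the Betti numbers. This holds because $\beta_{1}$ of an offset is invariant under the homeomorphism $x\mapsto\lambda x$, via Proposition~\ref{prop:superlevel}, and is a measurable functional of the set, as discussed in Section~\ref{subsec:persistence}. No regularity of $f$, such as measurability or monotonicity, is required, because the functional equation holds for every $\lambda$ rather than along a subsequence.
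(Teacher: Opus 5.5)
Your proof is correct and is essentially the paper's argument. The paper uses the same joint time–radius scaling identity $\beta_{1}^{T}(r)\eqd\beta_{1}^{\lambda^{2}T}(\lambda r)$ (with $\lambda=s/r$), then the slow-variation limit $a(\lambda^{2}T)/a(T)\to\lambda^{2}$, to conclude that $r^{2}f(r)$ is constant on $(0,\infty)$.
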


\begin{proof}
Since the first Betti number is invariant under planar dilations, for every $r>0$,
\[
\beta_{1}^{T}(r)\eqd\beta_{1}\bigl(K_{1}^{(r/\sqrt{T})}\bigr).
\]
Equivalently, for any $r,s>0$,
\[
\beta_{1}^{T}(r)\eqd\beta_{1}^{T(s/r)^{2}}(s).
\]
Indeed,
\[
\beta_{1}^{T(s/r)^{2}}(s)\eqd\beta_{1}\bigl(K_{1}^{(s/\sqrt{T(s/r)^{2}})}\bigr)=\beta_{1}\bigl(K_{1}^{(r/\sqrt{T})}\bigr).
\]
Hence
\[
\E[\beta_{1}^{T}(r)]=\E[\beta_{1}^{T(s/r)^{2}}(s)].
\]
Using the assumed asymptotic at radius $s$, we get
\[
\frac{\E[\beta_{1}^{T}(r)]}{a(T)}=\frac{\E[\beta_{1}^{T(s/r)^{2}}(s)]}{a\bigl(T(s/r)^{2}\bigr)}\,\frac{a\bigl(T(s/r)^{2}\bigr)}{a(T)}.
\]
The first factor converges to $f(s)$. Since $a(T)=T\ell(T)$ and $\ell$ is slowly varying,
\[
\frac{a\bigl(T(s/r)^{2}\bigr)}{a(T)}=\frac{s^{2}}{r^{2}}\,\frac{\ell\bigl(T(s/r)^{2}\bigr)}{\ell(T)}\to\frac{s^{2}}{r^{2}}.
\]
Therefore
\[
f(r)=f(s)\frac{s^{2}}{r^{2}}.
\]
Equivalently,
\[
r^{2}f(r)=s^{2}f(s).
\]
Since $r$ and $s$ were arbitrary, $r^{2}f(r)$ is constant on $(0,\infty)$. Thus there exists $\kappa\ge0$ such that \eqref{eq:4.2} holds. This proves the proposition.
\end{proof}

Thus, once the correct scale is identified, the entire fixed-radius problem reduces to one absolute constant.

\subsection{Euler characteristic identity}\label{subsec:euler}

Although the proofs of the upper and lower bounds below do not require Euler characteristic directly, it is useful to record the exact planar identity that links the hole count to classical geometric functionals of parallel sets.

\begin{proposition}[Euler characteristic and hole count]\label{prop:euler}
For every $T>0$ and every $r>0$,
\begin{equation}\label{eq:4.3}
\chi\bigl(K_{T}^{(r)}\bigr)=1-\beta_{1}^{T}(r).
\end{equation}
Equivalently,
\[
\E[\beta_{1}^{T}(r)]=1-\E\bigl[\chi\bigl(K_{T}^{(r)}\bigr)\bigr].
\]
\end{proposition}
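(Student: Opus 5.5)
We use $\chi=\beta_{0}-\beta_{1}+\beta_{2}$ for the (Čech or singular) Euler characteristic of the compact set $X:=K_{T}^{(r)}$, with coefficients in $k$. Three facts are needed. First, $\beta_{0}(X)=1$, since $K_{T}$ is connected and therefore so is its offset (Section~\ref{subsec:offsets}). Second, $\beta_{2}(X)=0$, and all higher Betti numbers vanish, because $X$ is a compact subset of $\R^{2}$. Third, $\beta_{1}(X)$ is finite, so that $\chi$ is well defined.

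For the vanishing and finiteness, we first establish the good-cover structure of $X$ for $r>0$. The offset $X$ is a planar set of positive reach, and in fact $\R^{2}\setminus\mathrm{int}\,X$ has reach at least $r$. Rather than relying on this, we prefer a direct argument via Alexander duality. Every compact $X\subset\R^{2}$ satisfies $\check H^{q}(X)\cong \tilde H_{1-q}(\R^{2}\setminus X)$ for $q\ge1$, since $\R^{2}\subset S^{2}$. This gives $\check H^{2}(X)=\tilde H_{-1}=0$. It also gives $\check H^{1}(X)\cong\tilde H_{0}(S^{2}\setminus X)$, whose rank is the number of complementary components minus one, that is, the number of bounded components of $\R^{2}\setminus X$.

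The number of bounded components is finite by Lemma~\ref{lem:finite}-type reasoning. Each bounded component of $\{\dist(\cdot,K_{T})>r\}$ contains a point $x$ with $\dist(x,K_{T})>r$. Choosing a local maximum point of $\dist(\cdot,K_{T})$ in the component, that point is at distance $>r$, and a small disk around it lies in the component. A cleaner route to finiteness is to use $q$-tameness (Section~\ref{subsec:persistence}): $\beta_{1}(K_{T}^{(r)})$ is bounded by the rank of $H_{1}(K^{(r')})\to H_{1}(K^{(r)})$ for $r'<r$, after checking that holes are stable from the left. Either way, this matches Proposition~\ref{prop:superlevel}.

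The main subtlety is the choice of homology theory. Offsets of wild compact sets need not be locally contractible everywhere, so singular and Čech homology could in principle differ. For the statement we need $\chi$ to mean the Čech/Alexander–Spanier Euler characteristic, which is the one entering the Steiner/Gauss–Bonnet-type formulas for parallel sets. We therefore interpret $\chi$ in Čech cohomology, which is consistent with the definition of $\beta_{1}$ used throughout via the counting of complementary components.

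Combining the three facts gives $\chi(X)=1-\beta_{1}^{T}(r)$ pointwise for every realization, which is \eqref{eq:4.3}. Taking expectations yields the second identity. The expectation is finite because $\beta_{1}^{T}(r)\le \Leb_{2}(K_{T}^{(r)})/(\pi r^{2})$, since each hole contains a disjoint disk of radius comparable to $r$ as in Lemma~\ref{lem:finite}, and $\E[\Leb_{2}(K_{T}^{(r)})]<\infty$. Measurability follows from Section~\ref{subsec:persistence}.
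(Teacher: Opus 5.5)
Your core computation is exactly the paper's one-line argument, and it is fine: $\beta_{0}=1$ by connectedness, and Alexander duality gives $\check H^{2}(X)=0$ and $\operatorname{rank}\check H^{1}(X)$ equal to the number of bounded complementary components. The problem is your third fact, finiteness of $\beta_{1}$, which you declare necessary and then do not establish.

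\textbf{The finiteness arguments do not work.} A hole of $K_{T}^{(r)}$ is a component $V$ of $\{\dist(\cdot,K_{T})>r\}$. It is only guaranteed to contain the disk $B(x_{V},\delta_{V})$, where $\delta_{V}=\max_{\overline V}\dist(\cdot,K_{T})-r$, and $\delta_{V}$ can be arbitrarily small. So disjointness of these disks bounds nothing.
\begin{itemize}
\item Lemma~\ref{lem:finite} concerns cavities of $K_{T}$ itself, which do contain disks of radius $r$. But it controls only $N_{T}(r)$, which is a \emph{lower} bound for $\beta_{1}^{T}(r)$: one cavity can contain many holes (Figure~\ref{fig:cavities}, and the splitting term of Appendix~\ref{secA1}).
\item The $q$-tameness route fails for the same reason. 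The rank of $H_{1}(K^{(r')})\to H_{1}(K^{(r)})$ counts classes born by $r'$ and still alive at $r$, so it is at most $\beta_{1}(K^{(r)})$. Equality requires that no class be born in $(r',r]$, which is itself the local finiteness you are trying to prove.
\end{itemize}

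\textbf{Compactness and connectedness alone cannot give finiteness at a prescribed radius.} Take the closed upper unit semicircle. For each $n$, with $\phi_{n}=8^{-n}$, attach at $\pm e_{1}$ a short arc that runs radially out to radius $1+\phi_{n}^{2}/4$ and then along that circle to $(1+\phi_{n}^{2}/4)(\pm\cos\phi_{n},-\sin\phi_{n})$. The resulting set $A$ is compact, connected and locally connected, so it is even the range of a continuous path. An elementary computation shows that for large $n$:
\begin{itemize}
\item the $1$-offset contains the segment $\{x_{2}=-\phi_{n},\ |x_{1}|\le1\}$, covered by the unit disks around the two arc endpoints;
\item it also contains the segments $\{x_{1}=\pm1,\ -\phi_{n}\le x_{2}\le-\phi_{n+1}\}$, which lie within distance $\phi_{n}$ of $\pm e_{1}$;
\item yet $\dist((0,-t),A)>1$ for $0.22\,\phi_{n}<t<0.29\,\phi_{n}$.
\end{itemize}
Hence $A^{(1)}$ has a separate hole in each strip $-\phi_{n}<x_{2}<-\phi_{n+1}$, so $\beta_{1}(A^{(1)})=\infty$ although $\Leb_{2}(A^{(1)})<\infty$.

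\textbf{Your integrability bound is false and unnecessary.} The same example refutes $\beta_{1}^{T}(r)\le\Leb_{2}(K_{T}^{(r)})/(\pi r^{2})$. You do not need it anyway: $\chi=1-\beta_{1}\le1$, so $\E[\chi]$ is always defined in $[-\infty,1]$, and the expectation identity follows from the pointwise one with no integrability assumption.

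You have two honest options. Either read \eqref{eq:4.3} in $[-\infty,1]$, as the paper's proof tacitly does, or supply Brownian-specific input for almost-sure finiteness at the given $(T,r)$ instead of a deterministic disk-packing argument.
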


\begin{proof}
The set $K_{T}^{(r)}$ is compact and connected. Since it is planar, all homology in degrees $k\ge2$ vanishes. Therefore
\[
\chi\bigl(K_{T}^{(r)}\bigr)=\beta_{0}\bigl(K_{T}^{(r)}\bigr)-\beta_{1}\bigl(K_{T}^{(r)}\bigr)=1-\beta_{1}^{T}(r),
\]
because $K_{T}^{(r)}$ is connected. Taking expectations gives the second identity.
\end{proof}

Proposition~\ref{prop:euler} will be relevant again in the discussion of the sharp constant.

\subsection{Upper bound at the corrected scale}\label{subsec:upper}

The deterministic coarea inequality of Section~\ref{sec:setup} controls the smoothed Betti observable by the area of the Wiener sausage, but that ceiling is too coarse to identify the true fixed-radius order. The sharper upper route uses the $r$-dependence of the planar Wiener-sausage geometry.

The key deterministic inequality is the following consequence of the planar curvature bound from \cite{ref7}: there exists an absolute constant $C_{\mathrm{geo}}$ such that, for every compact connected planar set $K$ and every $r>0$,
\begin{equation}\label{eq:4.4}
\beta_{1}\bigl(K^{(r)}\bigr)\le\frac{C_{\mathrm{geo}}}{r}\,\mathcal{H}^{1}\bigl(\partial K^{(r)}\bigr).
\end{equation}
Applied to $K=K_{T}$, this gives
\[
\beta_{1}^{T}(r)\le\frac{C_{\mathrm{geo}}}{r}\,\mathcal{H}^{1}\bigl(\partial K_{T}^{(r)}\bigr).
\]
To convert this into the corrected fixed-radius scale, we use the planar mean-surface-area estimate of Rataj, Schmidt, and Spodarev \cite{ref20}. In the planar case, combined with the first-derivative identity for expected parallel volume, it yields the fixed-radius perimeter bound below. See also Le Gall \cite{ref14,ref15} for the refined planar mean-area expansions underlying this logarithmic regime.

\begin{lemma}[Expected Wiener-sausage perimeter at fixed radius]\label{lem:perimeter}
Let $0<a<b<\infty$. There exist constants $C_{a,b}<\infty$ and $T_{a,b}<\infty$ such that, for all $T\ge T_{a,b}$ and all $r\in[a,b]$,
\begin{equation}\label{eq:4.5}
\E\bigl[\mathcal{H}^{1}\bigl(\partial K_{T}^{(r)}\bigr)\bigr]\le C_{a,b}\frac{T}{(\log T)^{2}}.
\end{equation}
Consequently, for every fixed $r>0$,
\[
\limsup_{T\to\infty}\frac{(\log T)^{2}}{T}\,\E\bigl[\mathcal{H}^{1}\bigl(\partial K_{T}^{(r)}\bigr)\bigr]<\infty.
\]
\end{lemma}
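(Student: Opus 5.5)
The plan is to read the perimeter as the radial derivative of the parallel volume. I would then bound this derivative by a finite difference quotient in $r$, using a deterministic monotonicity property of planar parallel sets. The expected area difference is then controlled by the second-order Spitzer--Le Gall expansion of the mean sausage area. The point is that the leading term $2\pi T/\log T$ does not depend on $r$, so a difference in $r$ only sees the next order, $T/(\log T)^{2}$.

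\emph{Step 1 (deterministic).} Fix a compact $K\subset\R^{2}$ and set $\phi_{K}(r):=\Leb_{2}(K^{(r)})$. By the planar results of Rataj--Schmidt--Spodarev \cite{ref20}, $\phi_{K}$ is differentiable at all but countably many $r>0$, with $\phi_{K}'(r)=\mathcal{H}^{1}(\partial K^{(r)})$ there, and the one-sided derivatives exist everywhere. I would combine this with Kneser's inequality $\phi_{K}(\lambda b)-\phi_{K}(\lambda a)\le\lambda^{2}(\phi_{K}(b)-\phi_{K}(a))$ for $\lambda\ge1$ and $0\le a<b$. It yields $\phi_{K}'(\lambda s)\le\lambda\,\phi_{K}'(s)$, i.e.\ $s\mapsto\phi_{K}'(s)/s$ is nonincreasing. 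Fix $h:=a/2$. For $r\in[a,b]$, averaging over $s\in[r-h,r]$ then gives
\[
\mathcal{H}^{1}\bigl(\partial K^{(r)}\bigr)\le\frac{r}{r-h}\cdot\frac{\phi_{K}(r)-\phi_{K}(r-h)}{h}\le\frac{4b}{a^{2}}\bigl(\phi_{K}(r)-\phi_{K}(r-h)\bigr).
\]
If the value of $\mathcal{H}^{1}(\partial K^{(r)})$ at an exceptional radius causes trouble, I would use the left derivative there instead.

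\emph{Step 2 (probabilistic).} Apply Step 1 with $K=K_{T}$ and take expectations. Write $g(t):=\E[\Leb_{2}(K_{t}^{(1)})]$. By Brownian scaling \eqref{eq:2.4}, $\E[\Leb_{2}(K_{T}^{(s)})]=s^{2}g(T/s^{2})$. By Le Gall's refinement \cite{ref14,ref15} of Spitzer's expansion,
\[
g(t)=\frac{2\pi t}{\log t}+\frac{c_{1}t}{(\log t)^{2}}+O\!\left(\frac{t}{(\log t)^{3}}\right),
\]
with an explicit constant $c_{1}$. Substituting $t=T/s^{2}$ and expanding $1/\log(T/s^{2})=1/\log T+2\log s/(\log T)^{2}+O((\log T)^{-3})$ uniformly for $s\in[a/2,b]$, I get
\[
s^{2}g(T/s^{2})=\frac{2\pi T}{\log T}+\bigl(4\pi\log s+c_{1}\bigr)\frac{T}{(\log T)^{2}}+O_{a,b}\!\left(\frac{T}{(\log T)^{3}}\right).
\]
The leading term is independent of $s$. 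Hence, uniformly for $r\in[a,b]$,
\[
\E\bigl[\phi_{K_{T}}(r)-\phi_{K_{T}}(r-h)\bigr]\le4\pi\log\frac{2b}{a}\,\frac{T}{(\log T)^{2}}+O_{a,b}\!\left(\frac{T}{(\log T)^{3}}\right).
\]
Together with Step 1 this gives \eqref{eq:4.5} for $T\ge T_{a,b}$. The $\limsup$ statement is the special case $a=r/2$, $b=2r$.

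\emph{Main obstacle.} The delicate input is the second-order mean-area expansion with an error $O(t/(\log t)^{3})$, or at least $o(t/(\log t)^{2})$. A first-order expansion alone is useless: its error, of order $T/(\log T)^{2}$, would not let the leading terms cancel to the needed precision. I would take this expansion from Le Gall's work. If only an $o(t/(\log t)^{2})$ error is available, Step 2 still closes, since it only needs the difference to be $O(T/(\log T)^{2})$.

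A second point to check is the validity of Kneser's inequality for arbitrary compact planar sets. If that proves troublesome, I would fall back on averaging over $r$ in Step 1 and then pass to pointwise values. That fallback would use the a.s.\ countability of exceptional radii together with Fubini, applied to the random exceptional set.
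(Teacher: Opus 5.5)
Your argument is correct, but it takes a different route from the paper.

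The paper imports from Rataj--Schmidt--Spodarev a ready-made unit-time bound $\E[\mathcal{H}^{1}(\partial K_{1}^{(\varepsilon)})]\le C\varepsilon^{-1}(\log(1/\varepsilon))^{-2}$. It justifies this bound only by remarking that the expected perimeter is the radial derivative of the mean area. It then rescales via $\mathcal{H}^{1}(\partial K_{T}^{(r)})\eqd\sqrt{T}\,\mathcal{H}^{1}(\partial K_{1}^{(r/\sqrt{T})})$.

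You never differentiate the mean area. Kneser's inequality converts the perimeter at radius $r$ into an area difference over $[r-h,r]$. The two-term Spitzer--Le Gall expansion, together with $\E[\Leb_{2}(K_{T}^{(s)})]=s^{2}g(T/s^{2})$, shows that this difference sees only the second-order term, because the leading term $2\pi T/\log T$ does not depend on $s$.

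The paper's proof is shorter, but its key step is exactly what your argument justifies. An asymptotic expansion of the mean area cannot be differentiated term by term without extra regularity, and your monotonicity of $\phi'(s)/s$ supplies that regularity. Your version needs only an $o(t/(\log t)^{2})$ error in the area expansion. Letting $h\downarrow0$ after $T\to\infty$, it even gives $\limsup_{T\to\infty}\frac{(\log T)^{2}}{T}\E[\mathcal{H}^{1}(\partial K_{T}^{(r)})]\le4\pi/r$. This is the value of the formal derivative $\partial_{r}\bigl(2\pi T/\log(T/r^{2})\bigr)$.

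One point needs tightening. Your Fubini fallback gives the bound only for almost every $r$, while the lemma asserts it for every $r\in[a,b]$. Using $\phi'_{K,-}(r)$ at an exceptional radius requires the pointwise inequality $\mathcal{H}^{1}(\partial K^{(r)})\le\phi'_{K,-}(r)$, which you should state and cite. A simpler fix is to run Step 1 on $F_{T}(r):=\E[\Leb_{2}(K_{T}^{(r)})]$ rather than pathwise. Kneser's inequality is linear in $\phi$, so it survives taking expectations. The identity $\E[\mathcal{H}^{1}(\partial K_{T}^{(r)})]=F_{T}'(r)$, which the paper itself takes from Rataj--Schmidt--Spodarev, then closes the argument at every $r$.
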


\begin{proof}
We use the expected-surface-area formula for the planar Wiener sausage. For the unit-time planar Brownian trace $K_{1}=B[0,1]$, Rataj, Schmidt and Spodarev \cite{ref20} prove that the expected boundary length of the planar Wiener sausage is finite and is obtained by differentiating the mean sausage area with respect to the radius. In dimension two, their formula implies the small-radius estimate
\[
\E\bigl[\mathcal{H}^{1}\bigl(\partial K_{1}^{(\varepsilon)}\bigr)\bigr]\le\frac{C}{\varepsilon(\log(1/\varepsilon))^{2}}
\]
for all sufficiently small $\varepsilon>0$, for some finite constant $C$. This is the only external geometric input used in the proof of the upper bound.

Now, using the Brownian scaling identity \eqref{eq:2.4}, and since boundary length scales linearly under planar dilations, we have:
\[
\mathcal{H}^{1}\bigl(\partial K_{T}^{(r)}\bigr)\eqd\sqrt{T}\,\mathcal{H}^{1}\bigl(\partial K_{1}^{(r/\sqrt{T})}\bigr).
\]
Taking expectations gives
\[
\E\bigl[\mathcal{H}^{1}\bigl(\partial K_{T}^{(r)}\bigr)\bigr]=\sqrt{T}\,\E\bigl[\mathcal{H}^{1}\bigl(\partial K_{1}^{(r/\sqrt{T})}\bigr)\bigr].
\]
Let
\[
\varepsilon_{T,r}=\frac{r}{\sqrt{T}}.
\]
Choose $T_{a,b}$ large enough so that $\varepsilon_{T,r}$ lies in the small-radius range of the unit-time estimate for every $T\ge T_{a,b}$ and every $r\in[a,b]$. Then
\[
\E\bigl[\mathcal{H}^{1}\bigl(\partial K_{T}^{(r)}\bigr)\bigr]\le\sqrt{T}\,\frac{C}{\varepsilon_{T,r}(\log(1/\varepsilon_{T,r}))^{2}}.
\]
Since $\varepsilon_{T,r}=r/\sqrt{T}$, this becomes
\[
\E\bigl[\mathcal{H}^{1}\bigl(\partial K_{T}^{(r)}\bigr)\bigr]\le\frac{CT}{r\,(\log(\sqrt{T}/r))^{2}}.
\]
For $r\in[a,b]$, we have $r^{-1}\le a^{-1}$, and, after increasing $T_{a,b}$ if necessary,
\[
\log\bigl(\sqrt{T}/r\bigr)\ge\frac{1}{3}\log T
\]
uniformly for $r\in[a,b]$. Hence
\[
\E\bigl[\mathcal{H}^{1}\bigl(\partial K_{T}^{(r)}\bigr)\bigr]\le 9Ca^{-1}\frac{T}{(\log T)^{2}}
\]
for all $T\ge T_{a,b}$ and all $r\in[a,b]$. This proves the uniform fixed-radius perimeter bound. The final assertion follows by taking any compact interval $[a,b]\subset(0,\infty)$ containing the chosen radius $r$.
\end{proof}

\begin{proposition}[Upper fixed-radius bound]\label{prop:upper}
There exists an absolute constant $c_{+}<\infty$ such that, for every fixed $r>0$,
\begin{equation}\label{eq:4.6}
\limsup_{T\to\infty}\frac{(\log T)^{2}}{T}\,\E[\beta_{1}^{T}(r)]\le\frac{c_{+}}{r^{2}}.
\end{equation}
\end{proposition}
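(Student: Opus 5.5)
The plan is to combine the deterministic curvature inequality \eqref{eq:4.4} with the Brownian-scaling perimeter estimate, tracking the $r$-dependence explicitly instead of through the uniform constant $C_{a,b}$ of Lemma~\ref{lem:perimeter}.

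First I take expectations in \eqref{eq:4.4} with $K=K_{T}$, which gives for every $T>0$ and $r>0$
\[
\E[\beta_{1}^{T}(r)]\le\frac{C_{\mathrm{geo}}}{r}\,\E\bigl[\mathcal{H}^{1}\bigl(\partial K_{T}^{(r)}\bigr)\bigr].
\]
Next I rerun the scaling step from the proof of Lemma~\ref{lem:perimeter}, rather than quoting its final $C_{a,b}$, because that constant carries the wrong dependence on $r$. Using \eqref{eq:2.4} and linear scaling of $\mathcal{H}^{1}$, together with the unit-time small-radius estimate $\E[\mathcal{H}^{1}(\partial K_{1}^{(\varepsilon)})]\le C\varepsilon^{-1}(\log(1/\varepsilon))^{-2}$ at $\varepsilon=r/\sqrt{T}$, I obtain for fixed $r$ and all large $T$
\[
\E\bigl[\mathcal{H}^{1}\bigl(\partial K_{T}^{(r)}\bigr)\bigr]\le\frac{C\,T}{r\,(\log(\sqrt{T}/r))^{2}}.
\]
Since $\log(\sqrt{T}/r)=\tfrac12\log T-\log r$, and $r$ is fixed, $(\log T)^{2}/(\log(\sqrt{T}/r))^{2}\to4$.

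Combining the two displays gives
\[
\limsup_{T\to\infty}\frac{(\log T)^{2}}{T}\E[\beta_{1}^{T}(r)]\le\frac{4C\,C_{\mathrm{geo}}}{r^{2}},
\]
so I can take $c_{+}=4CC_{\mathrm{geo}}$. This constant is absolute because $C$ comes from the unit-time Brownian trace and $C_{\mathrm{geo}}$ is a deterministic planar constant; neither depends on $r$.

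As a consistency check, the same conclusion should follow from the scaling argument of Proposition~\ref{prop:scalingreduction} in $\limsup$ form. Proving the bound at $r=1$ and transporting it via $\E[\beta_{1}^{T}(r)]=\E[\beta_{1}^{T/r^{2}}(1)]$ gives
\[
\frac{(\log T)^{2}}{T}\E[\beta_{1}^{T/r^{2}}(1)]=\frac{1}{r^{2}}\cdot\frac{(\log T)^{2}}{(\log(T/r^{2}))^{2}}\cdot\frac{(\log T')^{2}}{T'}\E[\beta_{1}^{T'}(1)],
\]
with $T'=T/r^{2}$. The middle factor tends to $1$, so the $r^{-2}$ profile comes out directly.

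The main obstacle is not in these computations but in the external input. The key estimate is the unit-time bound $\E[\mathcal{H}^{1}(\partial K_{1}^{(\varepsilon)})]=O(\varepsilon^{-1}(\log 1/\varepsilon)^{-2})$, taken from Rataj--Schmidt--Spodarev via the derivative of the mean sausage area. I treat it as given, exactly as Lemma~\ref{lem:perimeter} does. The only care needed on my side is that this estimate holds for all sufficiently small $\varepsilon$, which is automatic once $r$ is fixed and $T\to\infty$.
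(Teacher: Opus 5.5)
Your computation is correct given its inputs, and it is essentially the paper's proof. The paper applies \eqref{eq:4.4} at radius $1$, uses Lemma~\ref{lem:perimeter} there, and then transports to radius $r$ through $\E[\beta_{1}^{T}(r)]=\E[\beta_{1}^{T/r^{2}}(1)]$, which is your ``consistency check''. Your main line instead applies \eqref{eq:4.4} directly at radius $r$ and redoes the scaling of Lemma~\ref{lem:perimeter} with $\varepsilon=r/\sqrt{T}$.

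\textbf{The gap, shared with the paper.} The input you treat as a fixed planar fact does not hold: the deterministic bound \eqref{eq:4.4} is false for general compact connected $K$. The reason is that holes of $K^{(r)}$ can be arbitrarily small, and then they cost almost no perimeter.

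\emph{Counterexample.} Put $r'=r(1+\delta)$ and let $\Lambda$ be the triangular lattice of spacing $\sqrt{3}\,r'$, whose covering radius is $r'$. The $r$-disks about $\Lambda$ then cover the plane except for curvilinear triangles of diameter $O(\delta r)$ around the circumcentres. In an $L\times L$ box, let $K$ be the union of three pieces: the even-indexed lattice rows, one transversal segment joining them, and, for each point of an odd row, the vertical segment from it down to the even row below.
\begin{itemize}
\item This $K$ is compact and connected.
\item Away from the box edges, the circumcentres of the upward triangles between rows $2k+1$ and $2k+2$ stay at distance exactly $r'$ from $K$. Hence $\beta_{1}(K^{(r)})\ge cL^{2}/r^{2}$.
\item On the other hand, $\mathcal{H}^{1}(\partial K^{(r)})=O(L)+O(\delta L^{2}/r)$.
\end{itemize}
Letting $\delta\to0$ and $L/r\to\infty$ defeats every choice of $C_{\mathrm{geo}}$.

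\emph{Why this happens.} Gauss--Bonnet explains it. For $K$ a finite union of segments,
\[
2\pi\bigl(\beta_{1}(K^{(r)})-1\bigr)=\sum_{j}\theta_{j}-r^{-1}\mathcal{H}^{1}(\text{circular part of }\partial K^{(r)}),
\]
where $\theta_{j}\in(0,\pi)$ are the absolute turning angles at the reentrant corners. So length enters with the wrong sign, and holes are paid for by corners, not by perimeter.

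\emph{What is missing.} A valid upper bound needs a probabilistic estimate that no scaling or perimeter computation supplies. One option is $\E[\#\{\text{reentrant corners of }\partial K_{1}^{(\varepsilon)}\}]=O\bigl(\varepsilon^{-2}(\log(1/\varepsilon))^{-2}\bigr)$, combined with $\beta_{1}\le 1+\frac12\#\{\text{corners}\}$ and an approximation argument. Another is equivalent control of the tiny and split pockets described in Appendix~\ref{secA1}. As written, your proof is exactly as sound as \eqref{eq:4.4}, and so is the paper's.
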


\begin{proof}
We first prove the estimate at radius $1$. By the deterministic curvature/perimeter inequality,
\[
\beta_{1}^{T}(1)\le C_{\mathrm{geo}}\,\mathcal{H}^{1}\bigl(\partial K_{T}^{(1)}\bigr).
\]
By Lemma~\ref{lem:perimeter}, there exists a finite constant $C$ such that, for all sufficiently large $T$,
\[
\E[\beta_{1}^{T}(1)]\le C\frac{T}{(\log T)^{2}}.
\]
Now let $r>0$ be fixed. By Brownian scaling,
\[
\E[\beta_{1}^{T}(r)]\le C\frac{T/r^{2}}{(\log(T/r^{2}))^{2}}.
\]
Multiplying by $(\log T)^{2}/T$, we obtain
\[
\frac{(\log T)^{2}}{T}\,\E[\beta_{1}^{T}(r)]\le\frac{C}{r^{2}}\frac{(\log T)^{2}}{(\log(T/r^{2}))^{2}}.
\]
Since $r>0$ is fixed,
\[
\frac{(\log T)^{2}}{(\log(T/r^{2}))^{2}}\to1.
\]
Thus \eqref{eq:4.6} follows, after renaming $C$ as $c_{+}$.
\end{proof}

\subsection{Lower bound from Brownian cavity inradii}\label{subsec:lower}

We now turn to the lower bound. This is the point at which the cavity-count function introduced in Section~\ref{sec:cavity} becomes essential.

Recall from Proposition~\ref{prop:cavitylower} that, for every $T>0$ and every $r>0$,
\[
\beta_{1}^{T}(r)\ge N_{T}(r),
\]
where $N_{T}(r)=N_{K_{T}}(r)$ denotes the number of bounded connected components of $\R^{2}\setminus K_{T}$ whose inradius relative to the Brownian trace $K_{T}$ is larger than $r$. By Brownian scaling for cavity radii, Corollary~\ref{cor:cavitybrownscaling} gives
\[
N_{T}(r)\eqd N_{1}\!\left(\frac{r}{\sqrt{T}}\right).
\]
Thus the lower bound for $\beta_{1}^{T}(r)$ reduces to a small-scale estimate for the inradii of the bounded complementary components of the unit-time planar Brownian trace.

Let $K_{1}=B[0,1]$, and let $(A_{i})_{i\ge1}$ be the bounded connected components of $\R^{2}\setminus K_{1}$. We order these components so that their areas
\[
L_{i}=\Leb_{2}(A_{i})
\]
are nonincreasing:
\[
L_{1}\ge L_{2}\ge\cdots.
\]
Ties have probability zero by Le Gall's joint area density \cite{ref16}; we break any such ties by a fixed deterministic measurable rule, so that $(A_{i})_{i\ge1}$ is a measurable functional of $K_{1}$.

Since $A_{i}$ is a connected component of $\R^{2}\setminus K_{1}$ and $\partial A_{i}\subset K_{1}$, one has:
\[
\dist(x,K_{1})=\dist(x,\partial A_{i}),\qquad x\in A_{i}.
\]
Hence
\[
\rho_{K_{1}}(A_{i})=\sup_{x\in A_{i}}\dist(x,K_{1})=\rho_{\partial A_{i}}(A_{i}).
\]
The next lemma is the geometric bridge between area asymptotics and the inradius-count lower bound. It is derived from Le Gall's area theorem and Werner's empirical shape theorem.

\begin{lemma}[Dyadic round-component lower bound]\label{lem:dyadic}
There exist deterministic constants $c_{0}>0$, $c_{1}>0$, and an almost surely finite random integer $n_{0}$ such that, almost surely, for every integer $n\ge n_{0}$,
\begin{equation}\label{eq:4.7}
\card\Bigl\{i\in\{n,\ldots,2n-1\}:\rho_{K_{1}}(A_{i})\ge\frac{c_{0}}{\sqrt{n}\,\log n}\Bigr\}\ge c_{1}n.
\end{equation}
\end{lemma}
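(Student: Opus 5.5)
The plan is to combine two almost-sure inputs on the unit-time trace: Le Gall's area asymptotics, which locate the areas $L_i$ for $i\in\{n,\dots,2n-1\}$, and Werner's empirical shape theorem, which says that most components of a given area are not too elongated. Together these give inradii of order $\sqrt{L_i}$ for a positive proportion of indices.

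\emph{Step 1: area asymptotics.} Le Gall \cite{ref16} gives, almost surely, $\#\{i: L_i\ge \varepsilon\}\sim \frac{c}{\varepsilon(\log\varepsilon)^2}$ as $\varepsilon\downarrow 0$, with $c=2\pi$ up to normalization. Inverting this relation yields, almost surely, $L_n\sim \frac{c}{n(\log n)^2}$. Hence there is an a.s.\ finite $n_0'$ such that for $n\ge n_0'$ every $i\in\{n,\dots,2n-1\}$ satisfies
\[
\frac{c'}{n(\log n)^{2}}\le L_i\le \frac{C'}{n(\log n)^{2}},
\]
where we use $L_{2n}\ge L_{2n-1}$ and $\log(2n)\sim\log n$. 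Consequently $\sqrt{L_i}\asymp \frac{1}{\sqrt n\,\log n}$ uniformly in that window.

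\emph{Step 2: shape control.} For a bounded planar domain $A$, set $\theta(A):=\rho_{\partial A}(A)/\sqrt{\Leb_2(A)}$, which is scale invariant. Werner's empirical shape theorem \cite{ref24}, in the form used by Holden--Nacu--Peres--Salisbury \cite{ref10}, says that the empirical distribution of the normalized shapes of components with area in a dyadic window $[\varepsilon,2\varepsilon]$ converges a.s.\ to a deterministic law $\mu$ on shapes. I will take $\mu$-typical shapes to be genuine domains with positive inradius, for instance via the Brownian loop/bubble description of $\mu$. Then there is $\eta>0$ with $\mu(\theta>\eta)\ge 1/2$. If $\theta$ is lower semicontinuous in the relevant shape topology (Hausdorff convergence of boundaries, say), the portmanteau theorem gives, a.s.\ for all small dyadic $\varepsilon$, that at least a fraction $1/3$ of the components with area in $[\varepsilon,2\varepsilon]$ have $\theta>\eta$. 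Lower semicontinuity is plausible: a ball inside the limit domain stays inside nearby domains, after shrinking it slightly.

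\emph{Step 3: transfer to the index window.} By Step 1 the index set $\{n,\dots,2n-1\}$ is covered by $O(1)$ dyadic area windows $[2^{-k},2^{-k+1}]$. Each such window contains $\asymp n$ components, by Le Gall's count again. Conversely, the components in a dyadic window that meets the index range are indexed within $[c''n, C''n]$. To handle the mismatch between index blocks and area windows, I will apply Step 2 to the union of the $O(1)$ dyadic windows with areas in $[c'/(n\log^2 n),\,C'/(n\log^2 n)]$. This shows that a fraction at least $\frac13$ of those components are round. The block $\{n,\dots,2n-1\}$ is contained in this union and makes up a fixed positive fraction $\ge \delta$ of it. Therefore at least $(\delta-\frac23+\dots)$ of them are round, which is positive only if $\mu(\theta>\eta)$ is close to $1$.

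So I will choose $\eta$ so small that $\mu(\theta>\eta)\ge 1-\delta/2$. This is possible because $\mu(\theta>0)=1$. With that choice at least $c_1n$ indices in the block have $\rho_{K_1}(A_i)\ge \eta\sqrt{L_i}\ge \eta\sqrt{c'}/(\sqrt n\log n)$. This proves \eqref{eq:4.7} with $c_0=\eta\sqrt{c'}$, using $\rho_{K_1}(A_i)=\rho_{\partial A_i}(A_i)$ as noted above.

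\emph{Main obstacle.} The hard part is Step 2: extracting from Werner's theorem an a.s.\ statement uniform over all small scales, together with the facts $\mu(\theta>0)=1$ and lower semicontinuity of the inradius functional. I would first try to state Werner's result as a.s.\ weak convergence along the continuum of scales, then apply it to the scale-invariant indicator of $\{\theta>\eta\}$, approximated from below by continuous functions.
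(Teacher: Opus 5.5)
Your Steps 1 and 3 are fine. The bookkeeping in Step 3 does close once you insist on $\mu(\theta>\eta)\ge 1-\delta/2$. Step 2, however, is a genuine gap, and the justification you sketch for it is false.

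The functional $\theta(A)=\rho_{\partial A}(A)/\sqrt{\Leb_2(A)}$ is not lower semicontinuous for Hausdorff convergence of boundaries. The thin annuli $\{1-w<|x|<1\}$ have boundaries converging to the unit circle as $w\downarrow0$. Yet their inradius is $w/2$ and $\theta\approx\sqrt{w}/(2\sqrt{2\pi})\to0$, while the unit disk has $\theta=1/\sqrt{\pi}$. Your heuristic that ``a ball inside the limit domain stays inside nearby domains'' fails because closeness of boundaries does not say on which side the domain lies. A topology that does track the domain, such as Hausdorff convergence of the closed complements, makes the inradius behave well. But there the area is only lower semicontinuous and can drop in the limit, which again destroys lower semicontinuity of the ratio. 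So the portmanteau step, and with it your uniform round-fraction bound in dyadic area windows, is not established.

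The missing idea is that no topology is needed at all. The paper uses Werner's theorem \cite[Theorem~14]{ref24} as a law of large numbers along the area ordering. For every bounded \emph{measurable} shape functional $F$, almost surely $\frac1n\sum_{i=1}^{n}F(\mathfrak{s}(A_i))\to\int F\,d\mathcal{L}$. Only measurability of $\theta$ (the paper's $G$) is then required, and this holds because the inradius is a countable supremum over rational balls contained in $A$.

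Take $F=\mathbf{1}_{\{\theta\ge\eta\}}$ with $u_0=\mathcal{L}\{\theta\ge\eta\}>0$, and set $R_n=\card\{i\le n:\rho_{K_1}(A_i)\ge\eta\sqrt{L_i}\}$. Then $R_n/n\to u_0$. Differencing in the index gives $R_{2n}-R_n\ge u_0n/4$ for large $n$. This puts a positive density of such indices in each block $\{n,\dots,2n-1\}$, up to a harmless shift by one. Combining with your Step 1 bound $L_i\ge a/(n(\log n)^2)$ gives the lemma with $c_0=\eta\sqrt{a}$.

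This route bypasses your Step 3 entirely. It also needs only positive $\mathcal{L}$-mass for $\{\theta\ge\eta\}$, not mass close to $1$. If you prefer to keep your area-window formulation, note that the window version of Werner's theorem for a fixed measurable $F$ follows from the cumulative one by the same differencing. With that version, your Step 3 goes through as written.
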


\begin{proof}
We use two classical inputs.

First, we use Le Gall's theorem on the areas of Brownian complementary components. Let
\[
M(u)=\card\{i\ge1:L_{i}>u\}
\]
be the number of bounded complementary components of $\R^{2}\setminus K_{1}$ with area strictly larger than $u$. Le Gall's Theorem~1.1 in \cite{ref16} implies that there exists a deterministic constant $\gamma>0$ such that, almost surely,
\begin{equation}\label{eq:4.8}
u(\log(1/u))^{2}M(u)\to\gamma\qquad\text{as }u\downarrow0.
\end{equation}
We claim that this implies the rank-area asymptotic
\begin{equation}\label{eq:4.9}
L_{i}\sim\frac{\gamma}{i(\log i)^{2}}\qquad\text{as }i\to\infty,
\end{equation}
almost surely.

Indeed, fix $\delta\in(0,1)$, and set
\[
v_{i}=\frac{(1+\delta)\gamma}{i(\log i)^{2}},\qquad w_{i}=\frac{(1-\delta)\gamma}{i(\log i)^{2}}.
\]
Since
\[
\log(1/v_{i})\sim\log i,\qquad\log(1/w_{i})\sim\log i,
\]
Le Gall's counting asymptotic gives
\[
M(v_{i})\sim\frac{i}{1+\delta}<i
\]
eventually, and
\[
M(w_{i})\sim\frac{i}{1-\delta}>i
\]
eventually. Hence, for all sufficiently large $i$,
\[
w_{i}<L_{i}\le v_{i}.
\]
Letting $\delta\downarrow0$ gives \eqref{eq:4.9}. In particular, there exist a deterministic constant $a>0$ and an almost surely finite random integer $n_{1}$ such that, almost surely, for every $n\ge n_{1}$ and every $i\in\{n,\ldots,2n-1\}$,
\begin{equation}\label{eq:4.10}
L_{i}\ge\frac{a}{n(\log n)^{2}}.
\end{equation}
By Werner's empirical shape theorem \cite[Theorem~14]{ref24}, applied to the area-ordered complementary components $A_{i}$ and their shape classes $\mathfrak{s}(A_{i})$ modulo translations and dilations, for every bounded measurable functional $F$ on the quotient shape space one has almost surely
\begin{equation}\label{eq:4.11}
\frac{1}{n}\sum_{i=1}^{n}F\bigl(\mathfrak{s}(A_{i})\bigr)\to\int F\,d\mathcal{L}.
\end{equation}
Define a shape functional $G$ by
\[
G\bigl(\mathfrak{s}(A)\bigr)=\frac{\rho_{\partial A}(A)}{\sqrt{\Leb_{2}(A)}},
\]
This is well defined on shape classes, because both $\rho_{\partial A}(A)$ and $\sqrt{\Leb_{2}(A)}$ are invariant under translations and homogeneous of degree one under dilations. The functional $G$ is measurable in Werner's shape $\sigma$-field: the area is measurable, and the inradius can be written as a countable supremum over rational balls contained in $A$. Moreover,
\[
0<G\bigl(\mathfrak{s}(A)\bigr)\le\frac{1}{\sqrt{\pi}}
\]
for every nonempty bounded open set $A$, since any ball of radius $\rho_{\partial A}(A)$ contained in $A$ has area $\pi\,\rho_{\partial A}(A)^{2}\le\Leb_{2}(A)$.

Because $G>0$ $\mathcal{L}$-almost surely, there exists $\theta>0$ such that
\[
u_{0}:=\mathcal{L}\{G\ge\theta\}>0.
\]
Applying Werner's theorem to the bounded measurable functional
\[
F=\mathbf{1}_{\{G\ge\theta\}},
\]
we obtain the following convergence. Set
\[
R_{n}=\card\{1\le i\le n:\rho_{K_{1}}(A_{i})\ge\theta\sqrt{L_{i}}\}.
\]
Then
\[
\frac{R_{n}}{n}\to u_{0}\qquad\text{almost surely}.
\]
A dyadic differencing argument now gives positive density in every sufficiently large dyadic block. Since $R_{n}/n\to u_{0}$, for all sufficiently large $n$,
\[
R_{2n}\ge\frac{3u_{0}}{2}n,\qquad R_{n}\le\frac{5u_{0}}{4}n.
\]
Thus
\[
R_{2n}-R_{n}\ge\frac{u_{0}}{4}n
\]
for all sufficiently large $n$. Hence there exist $u>0$ and an almost surely finite random integer $n_{2}$ such that, for every $n\ge n_{2}$,
\[
\card\{i\in\{n,\ldots,2n-1\}:\rho_{K_{1}}(A_{i})\ge\theta\sqrt{L_{i}}\}\ge un.
\]
Combining this with the dyadic lower bound \eqref{eq:4.10}, we obtain, for all sufficiently large $n$, that at least $un$ indices $i\in\{n,\ldots,2n-1\}$ satisfy
\[
\rho_{K_{1}}(A_{i})\ge\theta\sqrt{L_{i}}\ge\theta\sqrt{\frac{a}{n(\log n)^{2}}}=\frac{\theta\sqrt{a}}{\sqrt{n}\,\log n}.
\]
Setting
\[
c_{0}=\theta\sqrt{a},\qquad c_{1}=u,
\]
and taking $n_{0}=\max(n_{1},n_{2})$, we get \eqref{eq:4.7}. This proves the lemma.
\end{proof}

\begin{figure}[H]
\centering
\includegraphics[width=\textwidth]{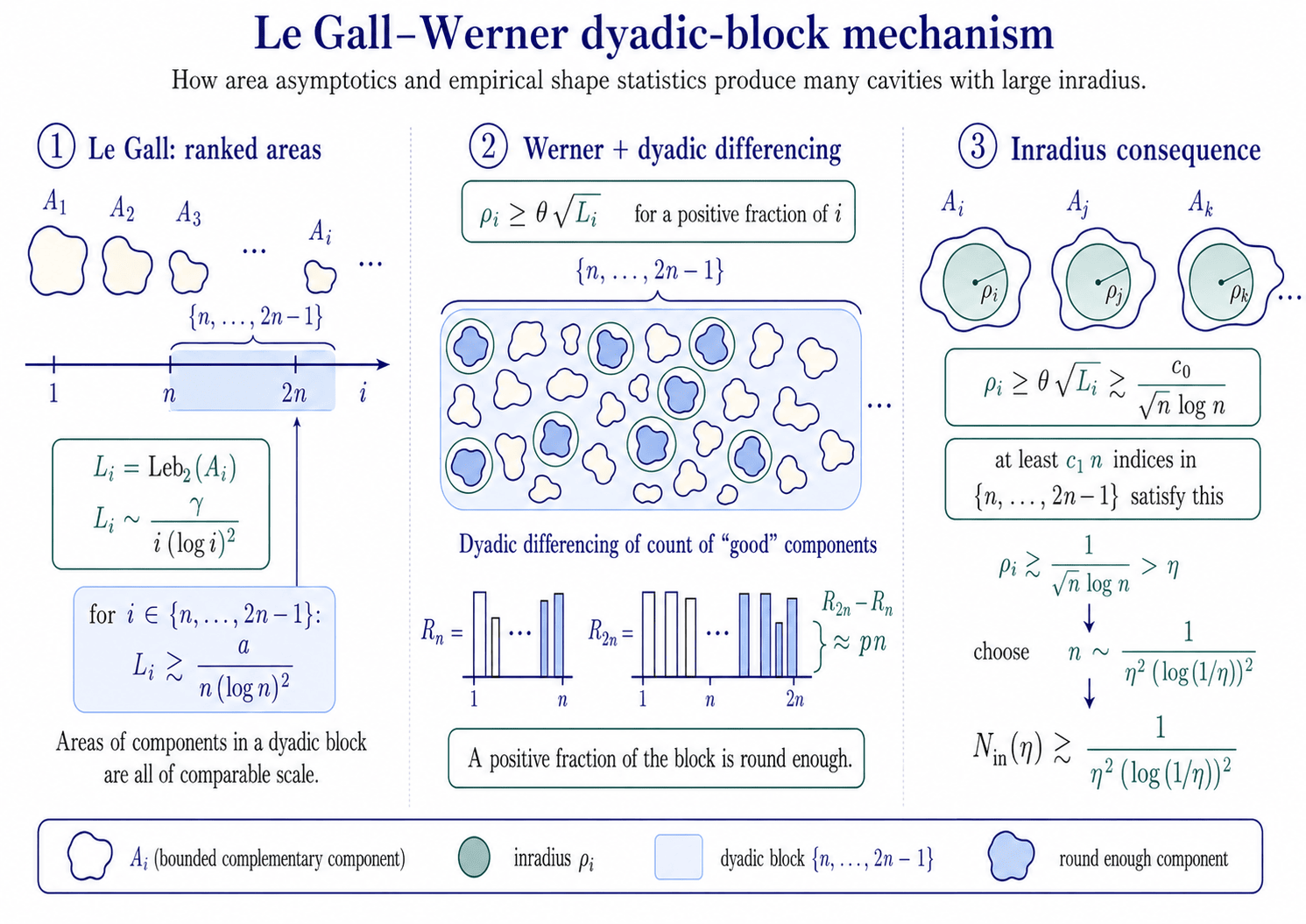}
\caption{\textbf{Dyadic block mechanism in the lower bound.} Le Gall's area theorem gives the area scale $L_{i}\asymp 1/\bigl(i(\log i)^{2}\bigr)$, while Werner's empirical shape theorem ensures that a positive fraction of components have inradius comparable to $\sqrt{L_{i}}$. Dyadic differencing localizes this positive fraction inside each block $\{n,\ldots,2n-1\}$.}\label{fig:dyadic}
\end{figure}

We now convert this dyadic statement into the inradius-count estimate needed for the fixed-radius lower bound.

\begin{proposition}[Brownian cavity inradius lower bound]\label{prop:inradius}
Define
\[
N_{\mathrm{in}}(\eta)=\card\{i\ge1:\rho_{K_{1}}(A_{i})>\eta\},
\]
which is exactly the unit-time cavity count in the notation of Section~\ref{sec:cavity}, i.e.
\[
N_{\mathrm{in}}(\eta)=N_{K_{1}}(\eta)=N_{1}(\eta).
\]
Then there exists an absolute constant $c_{-}>0$ such that
\begin{equation}\label{eq:4.12}
\liminf_{\eta\downarrow0}\eta^{2}(\log(1/\eta))^{2}\,\E[N_{\mathrm{in}}(\eta)]\ge c_{-}.
\end{equation}
In particular, after decreasing $c_{-}$ if necessary, there exists $\eta_{0}>0$ such that, for every $0<\eta<\eta_{0}$,
\[
\E[N_{\mathrm{in}}(\eta)]\ge\frac{c_{-}}{\eta^{2}(\log(1/\eta))^{2}}.
\]
\end{proposition}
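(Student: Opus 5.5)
The plan is to convert the almost-sure dyadic statement of Lemma~\ref{lem:dyadic} into a pathwise lower bound for $N_{\mathrm{in}}(\eta)$, and then pass to expectations with Fatou's lemma. The link between the two is the choice of a dyadic index $n$ adapted to $\eta$: we want the threshold $c_{0}/(\sqrt{n}\log n)$ appearing in \eqref{eq:4.7} to exceed $\eta$.

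\textbf{Step 1: choosing $n(\eta)$.} For small $\eta>0$, let $n=n(\eta)$ be the largest integer with
\[
\frac{c_{0}}{\sqrt{n}\,\log n}>\eta.
\]
Inverting this relation gives $\sqrt{n}\log n\asymp c_{0}/\eta$. Hence $\log n\sim 2\log(1/\eta)$, and therefore
\[
n(\eta)\sim\frac{c_{0}^{2}}{4\,\eta^{2}(\log(1/\eta))^{2}}
\]
as $\eta\downarrow0$. In particular $n(\eta)\to\infty$. Taking the largest such $n$ (and not merely some $n$) is what makes the order of $n(\eta)$ exactly $\eta^{-2}(\log(1/\eta))^{-2}$.

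\textbf{Step 2: pathwise bound.} On the almost sure event of Lemma~\ref{lem:dyadic}, once $\eta$ is small enough that $n(\eta)\ge n_{0}$, the lemma applies with $n=n(\eta)$. It produces at least $c_{1}n(\eta)$ indices $i\in\{n,\dots,2n-1\}$ with
\[
\rho_{K_{1}}(A_{i})\ge\frac{c_{0}}{\sqrt{n}\log n}>\eta.
\]
Each such $A_{i}$ is counted in $N_{\mathrm{in}}(\eta)$. Hence, almost surely,
\[
\liminf_{\eta\downarrow0}\eta^{2}(\log(1/\eta))^{2}N_{\mathrm{in}}(\eta)\ge c_{1}\liminf_{\eta\downarrow0}\eta^{2}(\log(1/\eta))^{2}n(\eta)=\frac{c_{1}c_{0}^{2}}{4}.
\]
The random threshold $n_{0}$ causes no difficulty, because it only delays the point from which the bound holds along each path.

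\textbf{Step 3: expectation.} The variables $\eta^{2}(\log(1/\eta))^{2}N_{\mathrm{in}}(\eta)$ are nonnegative, and $N_{\mathrm{in}}(\eta)$ is measurable as a functional of $K_{1}$, as set up before Lemma~\ref{lem:dyadic}. Applying Fatou's lemma along an arbitrary sequence $\eta_{k}\downarrow0$ gives
\[
\liminf_{k}\eta_{k}^{2}(\log(1/\eta_{k}))^{2}\,\E[N_{\mathrm{in}}(\eta_{k})]\ge\frac{c_{1}c_{0}^{2}}{4}.
\]
Since the sequence was arbitrary, this proves \eqref{eq:4.12} with $c_{-}=c_{1}c_{0}^{2}/4$. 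Fatou needs no integrability or domination, which is why the pathwise route avoids any moment estimates. The "in particular" statement then follows from the definition of $\liminf$: for any smaller constant, such as $c_{-}/2$, there is $\eta_{0}>0$ such that the bound holds for all $\eta<\eta_{0}$.

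\textbf{Expected obstacle.} The analytic steps above are routine. The real weight of the argument lies in Lemma~\ref{lem:dyadic}, which depends on Le Gall's theorem and Werner's shape theorem and is taken as given here. Within this proof, the only delicate point is the inversion in Step~1, which must be handled correctly with respect to the logarithmic factors.
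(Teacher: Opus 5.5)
Your proposal is correct and follows essentially the same route as the paper. Both arguments pick a dyadic index $n$ of order $\eta^{-2}(\log(1/\eta))^{-2}$ whose threshold $c_{0}/(\sqrt{n}\log n)$ exceeds $\eta$, apply Lemma~\ref{lem:dyadic} pathwise to get $N_{\mathrm{in}}(\eta)\ge c_{1}n$ for all small $\eta$, and pass to expectations with Fatou's lemma. The only difference is cosmetic: the paper takes $n_{\eta}=\lfloor\alpha/(\eta^{2}(\log(1/\eta))^{2})\rfloor$ with $\alpha<c_{0}^{2}/16$, so the threshold is at least $2\eta$ and no exact inversion is needed, while your maximal choice uses the monotonicity of $n\mapsto\sqrt{n}\log n$ to invert exactly and yields the slightly better constant $c_{1}c_{0}^{2}/4$.
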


\begin{proof}
Let
\[
L_{\eta}=\log(1/\eta).
\]
Then $L_{\eta}\to\infty$ as $\eta\downarrow0$. Choose a deterministic constant $\alpha>0$, to be fixed below, and let $n_{\eta}$ be the integer part of $\alpha/(\eta^{2}L_{\eta}^{2})$, that is,
\[
n_{\eta}\le\frac{\alpha}{\eta^{2}L_{\eta}^{2}}<n_{\eta}+1.
\]
Then
\[
n_{\eta}\sim\frac{\alpha}{\eta^{2}L_{\eta}^{2}}\qquad\text{as }\eta\downarrow0,
\]
and therefore $n_{\eta}\to\infty$. Moreover,
\[
\log n_{\eta}=2L_{\eta}-2\log L_{\eta}+\log\alpha+o(1),
\]
so
\[
\log n_{\eta}\sim2L_{\eta}.
\]
Consequently,
\[
\sqrt{n_{\eta}}\,\log n_{\eta}\sim\frac{\sqrt{\alpha}}{\eta L_{\eta}}\,2L_{\eta}=\frac{2\sqrt{\alpha}}{\eta}.
\]
Thus
\[
\frac{c_{0}}{\sqrt{n_{\eta}}\,\log n_{\eta}}\sim\frac{c_{0}}{2\sqrt{\alpha}}\,\eta.
\]
Choose $\alpha>0$ small enough that
\[
\frac{c_{0}}{2\sqrt{\alpha}}>2.
\]
For example, it is enough to take
\[
0<\alpha<\frac{c_{0}^{2}}{16}.
\]
With this choice of $\alpha$, for all sufficiently small $\eta$,
\[
\frac{c_{0}}{\sqrt{n_{\eta}}\,\log n_{\eta}}\ge2\eta.
\]
By Lemma~\ref{lem:dyadic}, almost surely there exists a random threshold $\bar{\eta}(\omega)>0$ such that, for every $0<\eta<\bar{\eta}(\omega)$, the integer $n_{\eta}$ is at least $n_{0}(\omega)$, and therefore
\[
\card\Bigl\{i\in\{n_{\eta},\ldots,2n_{\eta}-1\}:\rho_{K_{1}}(A_{i})\ge\frac{c_{0}}{\sqrt{n_{\eta}}\,\log n_{\eta}}\Bigr\}\ge c_{1}n_{\eta}.
\]
For such $\eta$, every component counted on the left satisfies
\[
\rho_{K_{1}}(A_{i})\ge2\eta>\eta.
\]
Consequently,
\[
N_{\mathrm{in}}(\eta)\ge c_{1}n_{\eta}
\]
for all sufficiently small $\eta$, almost surely.

Since
\[
n_{\eta}\sim\frac{\alpha}{\eta^{2}L_{\eta}^{2}},
\]
there exists a deterministic constant $c_{2}>0$ such that, for all sufficiently small $\eta$,
\[
n_{\eta}\ge\frac{c_{2}}{\eta^{2}L_{\eta}^{2}}.
\]
Thus, almost surely,
\[
N_{\mathrm{in}}(\eta)\ge\frac{c_{1}c_{2}}{\eta^{2}L_{\eta}^{2}}
\]
for all sufficiently small $\eta$. Equivalently,
\[
\liminf_{\eta\downarrow0}\eta^{2}L_{\eta}^{2}N_{\mathrm{in}}(\eta)\ge c_{1}c_{2}\qquad\text{almost surely}.
\]
Fatou's lemma gives
\[
\liminf_{\eta\downarrow0}\eta^{2}L_{\eta}^{2}\,\E[N_{\mathrm{in}}(\eta)]\ge\E\Bigl[\liminf_{\eta\downarrow0}\eta^{2}L_{\eta}^{2}N_{\mathrm{in}}(\eta)\Bigr]\ge c_{1}c_{2}.
\]
Renaming $c_{1}c_{2}$ as $c_{-}$, we obtain \eqref{eq:4.12}. The final uniform lower bound follows immediately from the definition of the liminf.
\end{proof}

We can now deduce the fixed-radius lower bound.

\begin{proposition}[Lower fixed-radius bound]\label{prop:lower}
There exists an absolute constant $c_{-}>0$ such that, for every fixed $r>0$,
\begin{equation}\label{eq:4.13}
\liminf_{T\to\infty}\frac{(\log T)^{2}}{T}\,\E[\beta_{1}^{T}(r)]\ge\frac{c_{-}}{r^{2}}.
\end{equation}
\end{proposition}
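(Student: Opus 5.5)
The plan is to chain three results already in hand: the deterministic comparison \eqref{eq:3.7}, the cavity scaling Corollary~\ref{cor:cavitybrownscaling}, and the small-scale estimate Proposition~\ref{prop:inradius}. Fix $r>0$. First, by \eqref{eq:3.7}, $\beta_{1}^{T}(r)\ge N_{T}(r)$ almost surely, and taking expectations gives $\E[\beta_{1}^{T}(r)]\ge\E[N_{T}(r)]$. Second, by Corollary~\ref{cor:cavitybrownscaling}, $\E[N_{T}(r)]=\E[N_{1}(r/\sqrt{T})]=\E[N_{\mathrm{in}}(\eta_{T})]$ with $\eta_{T}:=r/\sqrt{T}$. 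Since $\eta_{T}\downarrow0$ as $T\to\infty$, the problem reduces to the unit-time small-scale regime.

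Third, I apply \eqref{eq:4.12} along $\eta=\eta_{T}$:
\[
\liminf_{T\to\infty}\eta_{T}^{2}\bigl(\log(1/\eta_{T})\bigr)^{2}\,\E[N_{\mathrm{in}}(\eta_{T})]\ge c_{-}.
\]
It then remains to rewrite this normalization. We have $\eta_{T}^{2}=r^{2}/T$ and $\log(1/\eta_{T})=\tfrac12\log T-\log r$. For fixed $r$, this gives $(\log(1/\eta_{T}))^{2}\sim\tfrac14(\log T)^{2}$. Hence
\[
\frac{(\log T)^{2}}{T}\,\E[\beta_{1}^{T}(r)]\ \ge\ \frac{1}{r^{2}}\cdot\frac{(\log T)^{2}}{(\log(1/\eta_{T}))^{2}}\cdot\eta_{T}^{2}\bigl(\log(1/\eta_{T})\bigr)^{2}\E[N_{\mathrm{in}}(\eta_{T})].
\]
The middle factor tends to $4$, so the $\liminf$ is at least $4c_{-}/r^{2}$. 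Renaming $4c_{-}$ as $c_{-}$ gives \eqref{eq:4.13}. The constant is absolute because the constant in Proposition~\ref{prop:inradius} does not depend on $r$. The only $r$-dependence comes through the exact factor $r^{-2}$ and the asymptotically negligible $\log r$ shift.

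There is no real obstacle here, since all of the probabilistic content sits in Lemma~\ref{lem:dyadic} and Proposition~\ref{prop:inradius}. The one point needing care is bookkeeping. Proposition~\ref{prop:inradius} is stated as a $\liminf$ over $\eta\downarrow0$, so restricting to the sequence or continuum $\eta_{T}$ is legitimate. The logarithm change $\log(1/\eta_{T})$ versus $\log T$ only contributes a factor converging to $4$, which would have to be tracked if one wanted a consistent constant with the upper bound of Proposition~\ref{prop:upper}. Alternatively, I could prove the bound at $r=1$ first and transfer it to general $r$ by the scaling relation $\beta_{1}^{T}(r)\eqd\beta_{1}^{T/r^{2}}(1)$, mirroring the proof of Proposition~\ref{prop:upper}.
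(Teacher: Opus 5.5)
Your proposal is correct and is essentially the paper's proof: you use the comparison $\beta_{1}^{T}(r)\ge N_{T}(r)$, the scaling identity $N_{T}(r)\eqd N_{\mathrm{in}}(r/\sqrt{T})$, and Proposition~\ref{prop:inradius} along $\eta_{T}=r/\sqrt{T}$, then note that $(\log T)^{2}/(\log(1/\eta_{T}))^{2}\to4$ and rename $4c_{-}$. The only cosmetic difference is that you apply \eqref{eq:4.12} in its $\liminf$ form, whereas the paper applies the eventual uniform lower bound; this changes nothing.
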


\begin{proof}
Fix $r>0$. By the deterministic cavity-count comparison from Proposition~\ref{prop:cavitylower},
\[
\beta_{1}\bigl(K_{T}^{(r)}\bigr)\ge N_{T}(r),
\]
where $N_{T}(r)$ denotes the number of bounded connected components of $\R^{2}\setminus B[0,T]$ whose inradius relative to $B[0,T]$ is larger than $r$.

By Brownian scaling,
\[
B[0,T]\eqd\sqrt{T}\,B[0,1].
\]
The dilation $x\mapsto\sqrt{T}\,x$ sends bounded complementary components of $B[0,1]$ bijectively onto bounded complementary components of $\sqrt{T}\,B[0,1]$, and it multiplies their inradii by $\sqrt{T}$. Hence
\[
N_{T}(r)\eqd N_{\mathrm{in}}\!\left(\frac{r}{\sqrt{T}}\right).
\]
Consequently,
\[
\E\bigl[\beta_{1}\bigl(K_{T}^{(r)}\bigr)\bigr]\ge\E[N_{T}(r)]=\E\Bigl[N_{\mathrm{in}}\!\Bigl(\frac{r}{\sqrt{T}}\Bigr)\Bigr].
\]
Set
\[
\eta_{T}:=\frac{r}{\sqrt{T}}.
\]
Then $\eta_{T}\downarrow0$ as $T\to\infty$. By Proposition~\ref{prop:inradius}, for all sufficiently large $T$,
\[
\E[N_{\mathrm{in}}(\eta_{T})]\ge\frac{c}{\eta_{T}^{2}\,|\log\eta_{T}|^{2}}
\]
for some absolute constant $c>0$. Since
\[
\eta_{T}^{2}=\frac{r^{2}}{T},
\]
we get
\[
\E\bigl[\beta_{1}\bigl(K_{T}^{(r)}\bigr)\bigr]\ge\frac{cT}{r^{2}\,|\log(r/\sqrt{T})|^{2}}.
\]
Multiplying by $(\log T)^{2}/T$, we obtain
\[
\frac{(\log T)^{2}}{T}\E\bigl[\beta_{1}\bigl(K_{T}^{(r)}\bigr)\bigr]\ge\frac{c}{r^{2}}\left(\frac{\log T}{|\log(r/\sqrt{T})|}\right)^{2}.
\]
Now
\[
\bigl|\log(r/\sqrt{T})\bigr|=\frac{1}{2}\log T-\log r\sim\frac{1}{2}\log T.
\]
Therefore
\[
\liminf_{T\to\infty}\frac{(\log T)^{2}}{T}\E\bigl[\beta_{1}\bigl(K_{T}^{(r)}\bigr)\bigr]\ge\frac{4c}{r^{2}}.
\]
Renaming $4c$ as $c_{-}$, we obtain \eqref{eq:4.13}.
\end{proof}

\subsection{Fixed-radius two-sided theorem}\label{subsec:twosided}

We may now combine the upper and lower bounds.

\begin{theorem}[Fixed-radius two-sided logarithmic theorem]\label{thm:fixedradius}
There exist absolute constants $0<c_{-}<c_{+}<\infty$ such that, for every fixed $r>0$,
\begin{equation}\label{eq:4.14}
\frac{c_{-}}{r^{2}}\le\liminf_{T\to\infty}\frac{(\log T)^{2}}{T}\,\E[\beta_{1}^{T}(r)]\le\limsup_{T\to\infty}\frac{(\log T)^{2}}{T}\,\E[\beta_{1}^{T}(r)]\le\frac{c_{+}}{r^{2}}.
\end{equation}
\end{theorem}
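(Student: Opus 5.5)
The theorem is essentially the conjunction of Proposition~\ref{prop:lower} and Proposition~\ref{prop:upper}, so the plan is to combine them and then handle the constants. For every fixed $r>0$, Proposition~\ref{prop:lower} gives the left inequality in \eqref{eq:4.14} with a constant $c_{-}'>0$. Proposition~\ref{prop:upper} gives the right inequality with a constant $c_{+}'<\infty$. Both constants are absolute: they do not depend on $r$, because each proposition was first reduced, via Brownian scaling, to a single unit-time estimate. For the lower bound this estimate is Proposition~\ref{prop:inradius} applied at $\eta_T=r/\sqrt{T}$. For the upper bound it is Lemma~\ref{lem:perimeter} at radius $1$ together with the scaling $\beta_{1}^{T}(r)\eqd\beta_{1}^{T/r^{2}}(1)$. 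The middle inequality $\liminf\le\limsup$ is trivial.

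The one point to check is the ordering $c_{-}<c_{+}$, which the statement requires strictly. Since the liminf is at most the limsup for each $r$, we get $c_{-}'/r^{2}\le c_{+}'/r^{2}$, hence $c_{-}'\le c_{+}'$. Strictness is then obtained at no cost: set $c_{-}:=c_{-}'$ and $c_{+}:=\max(c_{+}',2c_{-}')$. Enlarging an upper constant keeps the upper bound valid, so this choice gives $0<c_{-}<c_{+}<\infty$ with both bounds holding.

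The main obstacle, to the extent there is one, is confirming that the constants really are uniform in $r$. For the upper bound, the scaling step in Proposition~\ref{prop:upper} produces the factor $(\log T)^{2}/(\log(T/r^{2}))^{2}$. This factor tends to $1$ for each fixed $r$, and the theorem asserts only pointwise-in-$r$ asymptotics, so no uniformity in $r$ inside the limit is needed. For the lower bound, the factor $(\log T/|\log(r/\sqrt T)|)^{2}$ likewise tends to $4$ for each fixed $r$. The constants therefore come from $C_{\mathrm{geo}}$, the unit-time Rataj--Schmidt--Spodarev constant, and the Le Gall--Werner constants $c_0,c_1,\alpha$, none of which depends on $r$. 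This completes the plan.
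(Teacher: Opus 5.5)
Your proposal is correct and is the paper's proof: Theorem~\ref{thm:fixedradius} is Propositions~\ref{prop:lower} and~\ref{prop:upper} combined, with $r$-independent constants because both are reduced by scaling to unit-time estimates, and with the strict inequality $c_{-}<c_{+}$ arranged cosmetically (you enlarge $c_{+}$, the paper shrinks $c_{-}$). One caveat comes from the paper rather than from you: the upper half rests on the pathwise inequality \eqref{eq:4.4}, which fails for general compact connected sets, so the proof of Proposition~\ref{prop:upper} as written does not go through and the upper bound needs a different, presumably probabilistic, argument (take for $K$ the edge set of a triangle subdivided into $m^{2}$ equilateral cells of inradius $r+\delta$; then $\beta_{1}(K^{(r)})=m^{2}$ while $\mathcal{H}^{1}(\partial K^{(r)})=O(mr+m^{2}\delta)$).
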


\begin{proof}
The upper bound is Proposition~\ref{prop:upper}, and the lower bound is Proposition~\ref{prop:lower}. After decreasing $c_{-}$ if necessary, we may assume $c_{-}<c_{+}$.
\end{proof}

Theorem~\ref{thm:fixedradius} is the fixed-radius theorem announced in the introduction. Together with Proposition~\ref{prop:scalingreduction}, it shows that the recurrent planar hole count has the exact scale
\[
\frac{T}{r^{2}(\log T)^{2}},
\]
and that any eventual sharp asymptotic must involve a single absolute constant.

\subsection{Small-scale unit-time form}\label{subsec:smallscale4}

Theorem~\ref{thm:fixedradius} has a cleaner equivalent formulation in terms of the unit-time Brownian trace near radius zero.

By Brownian scaling, for every $T>0$ and every $r>0$,
\[
\beta_{1}^{T}(r)\eqd\beta_{1}\bigl(K_{1}^{(r/\sqrt{T})}\bigr).
\]
Taking $r=1$ and writing
\[
\varepsilon=\frac{1}{\sqrt{T}},
\]
we have
\[
T=\varepsilon^{-2},\qquad\log T=2\log(1/\varepsilon).
\]
Thus Theorem~\ref{thm:fixedradius} is equivalent, after changing the constants, to the following small-scale statement.

\begin{corollary}[Small-scale unit-time form]\label{cor:smallscale}
There exist absolute constants $0<c_{-}<c_{+}<\infty$ and $\varepsilon_{0}>0$ such that, for every $0<\varepsilon<\varepsilon_{0}$,
\begin{equation}\label{eq:4.15}
\frac{c_{-}}{\varepsilon^{2}(\log(1/\varepsilon))^{2}}\le\E\bigl[\beta_{1}\bigl(K_{1}^{(\varepsilon)}\bigr)\bigr]\le\frac{c_{+}}{\varepsilon^{2}(\log(1/\varepsilon))^{2}}.
\end{equation}
\end{corollary}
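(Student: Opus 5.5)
The plan is to transfer Theorem~\ref{thm:fixedradius} at the single radius $r=1$ to the unit-time trace through Brownian scaling, and then use the bounds that hold for all large $T$, not only in the liminf/limsup sense. Given $\varepsilon>0$, set $T=T(\varepsilon):=\varepsilon^{-2}$. By Proposition~\ref{prop:brownscaling}, or more directly by \eqref{eq:2.4} with $r=1$ and the invariance of $\beta_{1}$ under dilations, we have $\beta_{1}^{T}(1)\eqd\beta_{1}\bigl(K_{1}^{(1/\sqrt{T})}\bigr)=\beta_{1}\bigl(K_{1}^{(\varepsilon)}\bigr)$. Hence
\[
\E\bigl[\beta_{1}\bigl(K_{1}^{(\varepsilon)}\bigr)\bigr]=\E[\beta_{1}^{T}(1)],\qquad \frac{T}{(\log T)^{2}}=\frac{1}{4\,\varepsilon^{2}(\log(1/\varepsilon))^{2}}.
\]
The map $\varepsilon\mapsto\varepsilon^{-2}$ is a decreasing bijection from $(0,\infty)$ onto $(0,\infty)$, so $\varepsilon\downarrow0$ corresponds exactly to $T\to\infty$. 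Every statement "for all $T\ge T_{*}$" therefore becomes "for all $0<\varepsilon<T_{*}^{-1/2}$".

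Next we convert the asymptotic bounds into eventual bounds. Apply \eqref{eq:4.14} at $r=1$, with constants $c_{-}'<c_{+}'$. The definition of liminf gives a $T_{-}$ such that $\frac{(\log T)^{2}}{T}\E[\beta_{1}^{T}(1)]\ge c_{-}'/2$ for all $T\ge T_{-}$. The definition of limsup gives a $T_{+}$ such that the same quantity is at most $2c_{+}'$ for all $T\ge T_{+}$.

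Substituting the identities above, we get, for $0<\varepsilon<\varepsilon_{0}:=\max(T_{-},T_{+})^{-1/2}$,
\[
\frac{c_{-}'/8}{\varepsilon^{2}(\log(1/\varepsilon))^{2}}\le\E\bigl[\beta_{1}\bigl(K_{1}^{(\varepsilon)}\bigr)\bigr]\le\frac{c_{+}'/2}{\varepsilon^{2}(\log(1/\varepsilon))^{2}}.
\]
We take $c_{-}=c_{-}'/8$ and $c_{+}=c_{+}'/2$. Since $c_{-}'<c_{+}'$, these satisfy $c_{-}<c_{+}$; if needed we decrease $c_{-}$ further.

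There is no real obstacle here. The only point to check is that the passage from lim inf/lim sup to "for all sufficiently small $\varepsilon$" uses the slack factors $1/2$ and $2$, so that the constants remain strictly positive and finite. No uniformity in $r$ is needed because only $r=1$ is used. One could alternatively read the lower bound directly from Proposition~\ref{prop:inradius} combined with Proposition~\ref{prop:cavitylower}, and the upper bound from \eqref{eq:4.4} with the unit-time perimeter estimate in the proof of Lemma~\ref{lem:perimeter}. The scaling route above is shorter, so we use it.
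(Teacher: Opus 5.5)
Your proof is correct and follows the paper's argument: set $r=1$ and $\varepsilon=T^{-1/2}$ in the scaling identity $\beta_{1}^{T}(1)\eqd\beta_{1}\bigl(K_{1}^{(1/\sqrt{T})}\bigr)$, use $\log T=2\log(1/\varepsilon)$, and invoke Theorem~\ref{thm:fixedradius}. The only difference is that you write out the passage from the $\liminf$/$\limsup$ bounds to bounds valid for all small $\varepsilon$ (slack factors $1/2$ and $2$, then the factor $1/4$), which the paper compresses into ``after changing the constants''.
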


This is the fixed-radius theorem written in the scale-natural form for the zero-drift branch.

\subsection{On the sharp constant}\label{subsec:sharpconstant}

Theorem~\ref{thm:fixedradius} leaves open the matching of the lower and upper constants. By Proposition~\ref{prop:scalingreduction}, if an exact fixed-radius asymptotic exists, it must have the form
\begin{equation}\label{eq:4.16}
\E[\beta_{1}^{T}(r)]\sim\kappa\,\frac{T}{r^{2}(\log T)^{2}}
\end{equation}
for a single absolute constant $\kappa$.

Equivalently, by the small-scale formulation, this is the same as
\begin{equation}\label{eq:4.17}
\E\bigl[\beta_{1}\bigl(K_{1}^{(\varepsilon)}\bigr)\bigr]\sim\frac{\kappa}{4}\,\frac{1}{\varepsilon^{2}(\log(1/\varepsilon))^{2}}.
\end{equation}
Indeed, the factor $4$ comes from the relation
\[
\log\bigl(\varepsilon^{-2}\bigr)=2\log(1/\varepsilon).
\]
A plausible route to the sharp constant goes through the Euler identity of Proposition~\ref{prop:euler} and a distributional Gauss--Bonnet formula for planar parallel sets \cite{ref5,ref6,ref12,ref19,ref22}. This route suggests that the constant should be obtainable from the first-order curvature contribution of the small-radius Wiener sausage. At present, however, the rigorous implementation of that argument still hinges on controlling a defect distribution supported on the critical radii of the distance function.

A second possible route to the sharp constant would be more probabilistic and would avoid curvature measures altogether. By the small-scale form of Corollary~\ref{cor:smallscale}, the sharp fixed-radius constant is equivalent to a critical-window asymptotic for
\[
\E\bigl[\beta_{1}\bigl(K_{1}^{(\varepsilon)}\bigr)\bigr].
\]
Since $K_{1}^{(\varepsilon)}$ is connected, this is the same as counting the bounded connected components of
\[
\R^{2}\setminus K_{1}^{(\varepsilon)}.
\]
Thus one could try to prove the sharp constant by a critical-window extension of the Le Gall--Mountford--Honzl component-counting theory. In this formulation, the desired theorem would identify a constant $\lambda>0$ such that
\[
\E\bigl[\beta_{1}\bigl(K_{1}^{(\varepsilon)}\bigr)\bigr]\sim\lambda\frac{1}{\varepsilon^{2}(\log(1/\varepsilon))^{2}}.
\]
The corresponding fixed-radius constant would then be
\[
\kappa=4\lambda.
\]
Existing component-counting asymptotics control regimes in which components are classified by area or in which the sausage radius is separated from the natural linear scale of the components. The endpoint regime relevant here is the critical regime in which the thickening radius is comparable to the relevant inradius scale. This is exactly the regime in which Euler characteristic, curvature, and component-counting methods meet, and it appears to require a new borderline theorem.

For this reason, Theorem~\ref{thm:fixedradius} is stated here in the two-sided form that is already justified by the fixed-radius geometry. The sharp constant problem is left as the problem of controlling either the Gauss--Bonnet defect or, equivalently, the critical-window component count.

\section{Smoothed logarithmic expectation theorem}\label{sec:smoothed}

The purpose of this section is to pass from the fixed-radius theorem of Section~\ref{sec:fixedradius} to the smoothed Betti-curve observable defined by \eqref{eq:2.2}. We begin with the nonnegative case, which is the one relevant to the main theorem.

\subsection{Smoothed lower and upper bounds for nonnegative weights}\label{subsec:smoothedbounds}

Assume throughout this subsection that
\[
\psi\ge0.
\]
Then Theorem~\ref{thm:fixedradius} gives, for each fixed $r\in[r_{0},r_{1}]$,
\begin{equation}\label{eq:5.1}
\frac{c_{-}}{r^{2}}\,\frac{T}{(\log T)^{2}}\ \lesssim\ \E[\beta_{1}^{T}(r)]\ \lesssim\ \frac{c_{+}}{r^{2}}\,\frac{T}{(\log T)^{2}},\qquad T\to\infty.
\end{equation}
Since the entire observation window lies away from zero, the function $r\mapsto r^{-2}$ is bounded on $[r_{0},r_{1}]$. Thus the fixed-radius bounds are integrable over the radius window. We first record the lower bound.

\begin{proposition}[Smoothed lower bound]\label{prop:smoothedlower}
Let $\psi\ge0$ be bounded and supported in $[r_{0},r_{1}]\subset(0,\infty)$. Then
\begin{equation}\label{eq:5.2}
\liminf_{T\to\infty}\frac{\E[\Phi_{\psi}(T)]}{T/(\log T)^{2}}\ge c_{-}\int_{r_{0}}^{r_{1}}\frac{\psi(r)}{r^{2}}\,dr.
\end{equation}
\end{proposition}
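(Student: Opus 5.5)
The plan is to integrate the fixed-radius lower bound of Theorem~\ref{thm:fixedradius} (equivalently Proposition~\ref{prop:lower}) over the window $[r_{0},r_{1}]$ using Fatou's lemma in the radius variable. The relevant integrand is nonnegative, so no domination is needed for a liminf statement.

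First, using \eqref{eq:2.2}, Tonelli's theorem (everything is nonnegative and jointly measurable in $(\omega,r)$, by the measurability discussion in Section~\ref{subsec:persistence}) gives
\[
\frac{\E[\Phi_{\psi}(T)]}{T/(\log T)^{2}}=\int_{r_{0}}^{r_{1}}g_{T}(r)\,\psi(r)\,dr,\qquad g_{T}(r):=\frac{(\log T)^{2}}{T}\,\E[\beta_{1}^{T}(r)]\ge0.
\]
Here one should note that $\E[\beta_{1}^{T}(r)]$ is finite. This follows, for instance, from the upper bound \eqref{eq:4.4} combined with Lemma~\ref{lem:perimeter}, or from \eqref{eq:2.6}, although finiteness is not actually needed for Fatou's lemma.

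Second, I apply Fatou's lemma along an arbitrary sequence $T_{k}\to\infty$ realizing the liminf of the left-hand side:
\[
\liminf_{k}\int_{r_{0}}^{r_{1}}g_{T_{k}}(r)\psi(r)\,dr\ge\int_{r_{0}}^{r_{1}}\Bigl(\liminf_{k}g_{T_{k}}(r)\Bigr)\psi(r)\,dr\ge c_{-}\int_{r_{0}}^{r_{1}}\frac{\psi(r)}{r^{2}}\,dr.
\]
The last inequality is the pointwise statement of Proposition~\ref{prop:lower}: for every fixed $r$, $\liminf_{T\to\infty}g_{T}(r)\ge c_{-}/r^{2}$. A liminf along a subsequence is at least the full liminf, so this applies to $T_{k}$. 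The constant $c_{-}$ is absolute, independent of $r$, which is exactly what the statement requires.

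The only point requiring care is measurability of $r\mapsto g_{T}(r)$. To settle it, I would observe that $r\mapsto\beta_{1}^{T}(r)=\mu_{T}(\{b\le r<d\})$ is a measurable function of $(\omega,r)$, since it is a countable limit of simple functions built from the counting measure. Alternatively, Tonelli already provides measurability of the expectation in $r$. I do not expect any genuine obstacle: the pointwise lower bound plus nonnegativity suffices, and no uniformity in $r$ is needed, unlike for the upper bound.
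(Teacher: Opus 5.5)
Your proposal is correct and is essentially the paper's proof. The paper also writes $\E[\Phi_{\psi}(T)]=\int_{r_{0}}^{r_{1}}\E[\beta_{1}^{T}(r)]\,\psi(r)\,dr$ and applies Fatou's lemma in $r$ with the pointwise bound of Proposition~\ref{prop:lower}; your passage to a sequence $T_{k}$ realizing the liminf and your joint-measurability remark only make explicit what the paper leaves implicit.

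Since, as you note, finiteness of $\E[\beta_{1}^{T}(r)]$ is not needed, drop the parenthetical appeal to \eqref{eq:4.4} altogether. As a deterministic inequality for all compact connected $K$ it is false: a grid of $m^{2}$ square cells of side $2r+\delta$ with $\delta=r/m^{2}$ gives $\beta_{1}(K^{(r)})=m^{2}$ while $\mathcal{H}^{1}(\partial K^{(r)})=O(mr)$. Your lower-bound argument should rest only on Proposition~\ref{prop:lower}.
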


\begin{proof}
By \eqref{eq:2.2},
\begin{equation}\label{eq:5.3}
\E[\Phi_{\psi}(T)]=\int_{r_{0}}^{r_{1}}\E[\beta_{1}^{T}(r)]\,\psi(r)\,dr.
\end{equation}
Fix $r\in[r_{0},r_{1}]$. By Theorem~\ref{thm:fixedradius},
\[
\liminf_{T\to\infty}\frac{\E[\beta_{1}^{T}(r)]}{T/(\log T)^{2}}\ge\frac{c_{-}}{r^{2}}.
\]
Since $\psi(r)\ge0$, Fatou's lemma gives
\[
\liminf_{T\to\infty}\frac{\E[\Phi_{\psi}(T)]}{T/(\log T)^{2}}=\liminf_{T\to\infty}\int_{r_{0}}^{r_{1}}\frac{\E[\beta_{1}^{T}(r)]}{T/(\log T)^{2}}\,\psi(r)\,dr\ge\int_{r_{0}}^{r_{1}}\liminf_{T\to\infty}\frac{\E[\beta_{1}^{T}(r)]}{T/(\log T)^{2}}\,\psi(r)\,dr,
\]
which proves \eqref{eq:5.2}.
\end{proof}

The upper bound is similar.

\begin{proposition}[Smoothed upper bound]\label{prop:smoothedupper}
Let $\psi\ge0$ be bounded and supported in a compact interval $[r_{0},r_{1}]\subset(0,\infty)$. Then
\[
\limsup_{T\to\infty}\frac{\E[\Phi_{\psi}(T)]}{T/(\log T)^{2}}\le c_{+}\int_{r_{0}}^{r_{1}}\frac{\psi(r)}{r^{2}}\,dr.
\]
\end{proposition}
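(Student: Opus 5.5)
Start from \eqref{eq:5.3}, which writes $\E[\Phi_{\psi}(T)]$ as an integral of fixed-radius expectations, and pass to the limsup. Fatou's lemma does not apply in this direction, so we need a uniform domination of the normalized integrands
\[
g_{T}(r):=\frac{(\log T)^{2}}{T}\,\E[\beta_{1}^{T}(r)]\,\psi(r),\qquad r\in[r_{0},r_{1}],
\]
valid for all large $T$. Once this domination is available, the reverse Fatou lemma (or dominated convergence applied to $\sup_{S\ge T}g_{S}$) gives
\[
\limsup_{T\to\infty}\int_{r_{0}}^{r_{1}}g_{T}(r)\,dr\le\int_{r_{0}}^{r_{1}}\limsup_{T\to\infty}g_{T}(r)\,dr\le c_{+}\int_{r_{0}}^{r_{1}}\frac{\psi(r)}{r^{2}}\,dr,
\]
where the last inequality is Proposition~\ref{prop:upper} applied pointwise in $r$.

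The domination comes from the uniform statement in Lemma~\ref{lem:perimeter}, taken with $[a,b]=[r_{0},r_{1}]$. For $T\ge T_{r_{0},r_{1}}$ and every $r\in[r_{0},r_{1}]$, the deterministic inequality \eqref{eq:4.4} gives
\[
\E[\beta_{1}^{T}(r)]\le\frac{C_{\mathrm{geo}}}{r_{0}}\,\E\bigl[\mathcal{H}^{1}(\partial K_{T}^{(r)})\bigr]\le\frac{C_{\mathrm{geo}}C_{r_{0},r_{1}}}{r_{0}}\,\frac{T}{(\log T)^{2}}.
\]
Hence $g_{T}(r)\le C'\norm{\psi}_{\infty}$, a constant that is integrable on the compact window. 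The global area bound \eqref{eq:2.6} only controls the integral, and at the wrong scale $T/\log T$, so it is not enough here; the perimeter route is what supplies a bound that is uniform in $r$ at the correct scale.

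The remaining technical point is measurability of $r\mapsto\E[\beta_{1}^{T}(r)]$, which is needed to justify \eqref{eq:5.3} by Tonelli. This follows because $(\omega,r)\mapsto\beta_{1}^{T}(r)$ is jointly measurable. Indeed, $\beta_{1}^{T}(r)=\mu_{T}(\{b\le r<d\})$, so in $r$ it is a right-continuous step function of the measurable diagram. With that in place, the only genuine obstacle is the uniformity in $r$, and Lemma~\ref{lem:perimeter} provides it. The constant $c_{+}$ obtained this way is the same absolute constant as in Theorem~\ref{thm:fixedradius}.
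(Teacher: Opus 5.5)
Relative to the results stated earlier in the paper, your proof is correct and is essentially the paper's own argument: reverse Fatou on $[r_{0},r_{1}]$, with Proposition~\ref{prop:upper} giving the pointwise limsup and an $r$-uniform bound at scale $T/(\log T)^{2}$ as dominating function, which you take directly from Lemma~\ref{lem:perimeter} on $[a,b]=[r_{0},r_{1}]$ and \eqref{eq:4.4} instead of going, as the paper does, through the radius-one bound and the scaling $\beta_{1}^{T}(r)\eqd\beta_{1}^{T/r^{2}}(1)$, which rest on the same two inputs. Be aware, however, that both versions inherit \eqref{eq:4.4}, which is false as a deterministic inequality: let $K$ be the edge set of an equilateral triangle subdivided into $n=m^{2}$ congruent equilateral faces whose inradius lies in $(r,(1+1/n)r)$; each face keeps one small hole, so $\beta_{1}(K^{(r)})=n$, while $\mathcal{H}^{1}(\partial K^{(r)})/r=O(\sqrt{n})$.
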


\begin{proof}
By the Betti-curve representation,
\[
\E[\Phi_{\psi}(T)]=\int_{r_{0}}^{r_{1}}\psi(r)\,\E[\beta_{1}^{T}(r)]\,dr.
\]
Set
\[
f_{T}(r):=\psi(r)\,\frac{\E[\beta_{1}^{T}(r)]}{T/(\log T)^{2}},\qquad r\in[r_{0},r_{1}].
\]
Then
\[
\frac{\E[\Phi_{\psi}(T)]}{T/(\log T)^{2}}=\int_{r_{0}}^{r_{1}}f_{T}(r)\,dr.
\]
For each fixed $r>0$, the fixed-radius upper theorem gives
\[
\limsup_{T\to\infty}\frac{\E[\beta_{1}^{T}(r)]}{T/(\log T)^{2}}\le\frac{c_{+}}{r^{2}}.
\]
Hence, for every fixed $r\in[r_{0},r_{1}]$,
\[
\limsup_{T\to\infty}f_{T}(r)\le c_{+}\frac{\psi(r)}{r^{2}}.
\]
It remains only to justify passage of the limsup through the integral.

By the strengthened upper-bound proof in Proposition~\ref{prop:upper}, one has a non-asymptotic radius-one estimate: there exist constants $c_{+}<\infty$ and $S_{0}<\infty$ such that
\[
\E[\beta_{1}^{S}(1)]\le c_{+}\frac{S}{(\log S)^{2}}\qquad\text{for all }S\ge S_{0}.
\]
By Brownian scaling,
\[
\beta_{1}^{T}(r)\eqd\beta_{1}^{T/r^{2}}(1).
\]
Therefore, for $T/r^{2}\ge S_{0}$,
\[
\E[\beta_{1}^{T}(r)]\le c_{+}\frac{T/r^{2}}{(\log(T/r^{2}))^{2}}.
\]
Since $r\in[r_{0},r_{1}]$, there exists $T_{0}<\infty$ and $A<\infty$, depending only on $r_{0},r_{1}$, such that for all $T\ge T_{0}$ and all $r\in[r_{0},r_{1}]$,
\[
\left(\frac{\log T}{\log(T/r^{2})}\right)^{2}\le A.
\]
Consequently, for all $T\ge T_{0}$ and all $r\in[r_{0},r_{1}]$,
\[
\frac{\E[\beta_{1}^{T}(r)]}{T/(\log T)^{2}}\le\frac{c_{+}A}{r^{2}}.
\]
Thus
\[
0\le f_{T}(r)\le c_{+}A\norm{\psi}_{\infty}\frac{1}{r^{2}}\mathbf{1}_{[r_{0},r_{1}]}(r)=:G(r).
\]
Since $r_{0}>0$, the function $G$ is integrable on $[r_{0},r_{1}]$.

Now apply the reverse Fatou lemma to the nonnegative functions $f_{T}$, dominated by $G$. Equivalently, apply Fatou's lemma to $G-f_{T}$. We get
\[
\limsup_{T\to\infty}\int_{r_{0}}^{r_{1}}f_{T}(r)\,dr\le\int_{r_{0}}^{r_{1}}\limsup_{T\to\infty}f_{T}(r)\,dr.
\]
Therefore,
\[
\limsup_{T\to\infty}\frac{\E[\Phi_{\psi}(T)]}{T/(\log T)^{2}}\le\int_{r_{0}}^{r_{1}}c_{+}\frac{\psi(r)}{r^{2}}\,dr,
\]
which proves the claim.
\end{proof}

Combining Propositions~\ref{prop:smoothedlower} and \ref{prop:smoothedupper} yields the main theorem of the section.

\begin{theorem}[Smoothed logarithmic expectation theorem]\label{thm:smoothed}
Let $\psi\ge0$ be bounded and supported in a compact interval $[r_{0},r_{1}]\subset(0,\infty)$. Then
\begin{equation}\label{eq:5.4}
c_{-}\int_{r_{0}}^{r_{1}}\frac{\psi(r)}{r^{2}}\,dr\le\liminf_{T\to\infty}\frac{\E[\Phi_{\psi}(T)]}{T/(\log T)^{2}}\le\limsup_{T\to\infty}\frac{\E[\Phi_{\psi}(T)]}{T/(\log T)^{2}}\le c_{+}\int_{r_{0}}^{r_{1}}\frac{\psi(r)}{r^{2}}\,dr.
\end{equation}
\end{theorem}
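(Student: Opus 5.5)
The statement is simply the conjunction of Proposition~\ref{prop:smoothedlower} and Proposition~\ref{prop:smoothedupper}, so the plan is to assemble these two results and check that the constants and integrability conditions match. Both propositions use the same normalization $T/(\log T)^{2}$ and the same absolute constants $c_{-},c_{+}$ coming from Theorem~\ref{thm:fixedradius}. The middle inequality $\liminf\le\limsup$ is trivial. It only remains to note that $\int_{r_{0}}^{r_{1}}\psi(r)r^{-2}\,dr$ is finite, which holds because $\psi$ is bounded and $r_{0}>0$. Hence both outer bounds are finite, and the chain \eqref{eq:5.4} is meaningful.

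Concretely, I would first rewrite $\E[\Phi_{\psi}(T)]=\int_{r_{0}}^{r_{1}}\psi(r)\E[\beta_{1}^{T}(r)]\,dr$ by Tonelli, which is legitimate since everything is nonnegative, as in \eqref{eq:5.3}.

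\emph{Lower bound.} I would apply Fatou's lemma to the nonnegative integrands $\psi(r)\E[\beta_{1}^{T}(r)](\log T)^{2}/T$. Pointwise, their liminf is bounded below by Proposition~\ref{prop:lower}. This is exactly the argument of Proposition~\ref{prop:smoothedlower}.

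\emph{Upper bound.} I would use the domination built in Proposition~\ref{prop:smoothedupper}, namely the uniform non-asymptotic estimate $\E[\beta_{1}^{T}(r)]\le c_{+}A\,r^{-2}\,T/(\log T)^{2}$ for $T\ge T_{0}$ and $r\in[r_{0},r_{1}]$. Given this bound, the reverse Fatou lemma yields the claim.

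The only delicate point is this uniform domination. Theorem~\ref{thm:fixedradius} by itself gives only a pointwise limsup for each $r$, which is not enough to interchange limsup and integral. I would derive the uniform bound from Lemma~\ref{lem:perimeter}, which is already stated uniformly over $r\in[a,b]$ for $T\ge T_{a,b}$. Combined with the deterministic inequality \eqref{eq:4.4}, this gives
\[
\E[\beta_{1}^{T}(r)]\le \frac{C_{\mathrm{geo}}}{r}\,C_{r_{0},r_{1}}\frac{T}{(\log T)^{2}}\le \frac{C_{\mathrm{geo}}C_{r_{0},r_{1}}}{r_{0}}\frac{T}{(\log T)^{2}}
\]
uniformly on the window. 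That supplies an integrable dominating function, so the scaling step in Proposition~\ref{prop:upper} is not even needed.

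Measurability of $r\mapsto\E[\beta_{1}^{T}(r)]$ is also required for Tonelli. I would obtain it from the joint measurability of the Betti curve, which follows from the alive-count representation via $\mu_{T}$. With this, the theorem follows.
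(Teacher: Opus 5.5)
This is the paper's argument: Tonelli to write $\E[\Phi_{\psi}(T)]$ as a radius integral, Fatou for the lower bound, and reverse Fatou under a uniform domination for the upper bound. Your only deviation is taking the domination directly from Lemma~\ref{lem:perimeter} and \eqref{eq:4.4} on the whole window instead of from the radius-one bound plus scaling, which is cosmetic since \eqref{eq:4.4} is dilation invariant. Be aware, though, that your upper half therefore rests on \eqref{eq:4.4} exactly as the paper's does, and that inequality is false as a deterministic statement: for $K$ the union of the edges of an $n\times n$ grid of square cells of side $2(r+\delta)$ one has $\beta_{1}(K^{(r)})=n^{2}$, while $\mathcal{H}^{1}(\partial K^{(r)})=8(r+\delta)n+2\pi r+8n^{2}\delta$ tends to $8rn+2\pi r$ as $\delta\downarrow0$. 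So this gap is shared with the paper rather than introduced by you.
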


\subsection{Exact-constant sharpening}\label{subsec:exactconst}

Theorem~\ref{thm:smoothed} is the correct unconditional theorem at the current stage of the project. If the lower and upper fixed-radius constants match, then the smoothed theorem sharpens immediately to a full asymptotic formula.

\begin{corollary}[Exact-constant sharpening]\label{cor:exactconst}
Assume that there exists an absolute constant $\kappa>0$ such that for every fixed $r>0$,
\begin{equation}\label{eq:5.5}
\E[\beta_{1}^{T}(r)]\sim\frac{\kappa}{r^{2}}\,\frac{T}{(\log T)^{2}}\qquad(T\to\infty).
\end{equation}
Then for every bounded nonnegative $\psi$ supported in $[r_{0},r_{1}]\subset(0,\infty)$,
\begin{equation}\label{eq:5.6}
\E[\Phi_{\psi}(T)]\sim\kappa\,\frac{T}{(\log T)^{2}}\int_{r_{0}}^{r_{1}}\frac{\psi(r)}{r^{2}}\,dr.
\end{equation}
\end{corollary}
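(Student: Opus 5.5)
The plan is to follow the proof of Proposition~\ref{prop:smoothedupper} essentially verbatim, now using the assumed pointwise asymptotic \eqref{eq:5.5} in place of the two one-sided bounds. Starting from the Betti-curve representation \eqref{eq:5.3}, I will write
\[
\frac{\E[\Phi_{\psi}(T)]}{T/(\log T)^{2}}=\int_{r_{0}}^{r_{1}}f_{T}(r)\,dr,\qquad f_{T}(r):=\psi(r)\,\frac{\E[\beta_{1}^{T}(r)]}{T/(\log T)^{2}}.
\]
By \eqref{eq:5.5}, for every fixed $r\in[r_{0},r_{1}]$ one has the pointwise limit $f_{T}(r)\to\kappa\,\psi(r)/r^{2}$ as $T\to\infty$. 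The conclusion \eqref{eq:5.6} then reduces to exchanging this limit with the integral over the compact window. If $\int_{r_{0}}^{r_{1}}\psi(r)r^{-2}\,dr=0$, then $\psi=0$ a.e. and both sides of \eqref{eq:5.6} vanish, with the relation $\sim$ read as equality. Otherwise the limit statement is exactly the asymptotic equivalence in \eqref{eq:5.6}.

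To justify the exchange I will use dominated convergence, taking as the dominating function the one already built in the proof of Proposition~\ref{prop:smoothedupper}. That proof gives, for all $T\ge T_{0}$ and all $r\in[r_{0},r_{1}]$, the bound $0\le f_{T}(r)\le c_{+}A\norm{\psi}_{\infty}r^{-2}\mathbf{1}_{[r_{0},r_{1}]}(r)$. It relies on the non-asymptotic radius-one estimate from Lemma~\ref{lem:perimeter} and Proposition~\ref{prop:upper}, Brownian scaling $\beta_{1}^{T}(r)\eqd\beta_{1}^{T/r^{2}}(1)$, and the uniform comparison $\log T\asymp\log(T/r^{2})$ on the window.

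There is an alternative route that avoids the domination bound. Since \eqref{eq:5.5} holds at $r=1$, scaling gives
\[
\E[\beta_{1}^{T}(r)]=\E[\beta_{1}^{T/r^{2}}(1)]\sim\kappa\,\frac{T/r^{2}}{(\log(T/r^{2}))^{2}}.
\]
This asymptotic is uniform in $r\in[r_{0},r_{1}]$, because $T/r^{2}\ge T/r_{1}^{2}\to\infty$ uniformly and $\log(T/r^{2})/\log T\to1$ uniformly. Uniform convergence of $f_{T}$ on the window then immediately yields convergence of the integrals, since $\psi$ is bounded.

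The only point requiring care is measurability of $r\mapsto\E[\beta_{1}^{T}(r)]$, which is needed for \eqref{eq:5.3} via Fubini. This follows from the joint measurability of the Betti curve as a function of $(r,\omega)$: it is the alive count of the measurable persistence counting measure, as recorded in Section~\ref{sec:setup}. Beyond this, I expect no real obstacle, because the domination step was already carried out in Proposition~\ref{prop:smoothedupper}.
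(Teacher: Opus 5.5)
Your main route is the paper's proof: pointwise convergence of the integrand from \eqref{eq:5.5}, then dominated convergence on $[r_{0},r_{1}]$ with the dominating function $c_{+}A\norm{\psi}_{\infty}r^{-2}$ from the proof of Proposition~\ref{prop:smoothedupper}. Your alternative argument is also correct and slightly more self-contained: using \eqref{eq:5.5} only at $r=1$ together with $\E[\beta_{1}^{T}(r)]=\E[\beta_{1}^{T/r^{2}}(1)]$ gives uniform convergence on the window, so the conclusion no longer depends on the unconditional perimeter-based upper bound.
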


\begin{proof}
By \eqref{eq:5.3},
\[
\frac{\E[\Phi_{\psi}(T)]}{T/(\log T)^{2}}=\int_{r_{0}}^{r_{1}}\frac{\E[\beta_{1}^{T}(r)]}{T/(\log T)^{2}}\,\psi(r)\,dr.
\]
Under assumption \eqref{eq:5.5}, the integrand converges pointwise to $\kappa r^{-2}\psi(r)$. Since $r\ge r_{0}>0$, the uniform upper estimate used in the proof of Proposition~\ref{prop:smoothedupper} again provides an integrable dominating function on $[r_{0},r_{1}]$. Dominated convergence therefore yields \eqref{eq:5.6}.
\end{proof}

Thus the exact smoothed asymptotic is reduced entirely to the exact fixed-radius constant problem.

\subsection{Small-scale unit-time form}\label{subsec:smallscale5}

As in Section~\ref{sec:fixedradius}, the large-time theorem may be reformulated in the scale-natural language of the unit-time Brownian trace.

Recall from Proposition~\ref{prop:brownscaling} that
\[
\Phi_{\psi}(T)\eqd H_{\psi}\bigl(T^{-1/2}\bigr),\qquad H_{\psi}(\varepsilon):=\int_{r_{0}}^{r_{1}}\beta_{1}\bigl(K_{1}^{(\varepsilon r)}\bigr)\,\psi(r)\,dr.
\]
Since
\[
\frac{T}{(\log T)^{2}}\asymp\frac{\varepsilon^{-2}}{(\log(1/\varepsilon))^{2}}\qquad\bigl(\varepsilon=T^{-1/2}\bigr),
\]
Theorem~\ref{thm:smoothed} is equivalent to the following.

\begin{corollary}[Small-scale unit-time form]\label{cor:smoothsmallscale}
Let $\psi\ge0$ be bounded and supported in $[r_{0},r_{1}]\subset(0,\infty)$. Then, there exist absolute constants $0<c_{-}\le c_{+}<\infty$ such that
\begin{equation}\label{eq:5.7}
c_{-}\int_{r_{0}}^{r_{1}}\frac{\psi(r)}{r^{2}}\,dr\le\liminf_{\varepsilon\downarrow0}\frac{\E[H_{\psi}(\varepsilon)]}{\varepsilon^{-2}/(\log(1/\varepsilon))^{2}}\le\limsup_{\varepsilon\downarrow0}\frac{\E[H_{\psi}(\varepsilon)]}{\varepsilon^{-2}/(\log(1/\varepsilon))^{2}}\le c_{+}\int_{r_{0}}^{r_{1}}\frac{\psi(r)}{r^{2}}\,dr.
\end{equation}
\end{corollary}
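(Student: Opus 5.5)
The plan is to derive Corollary~\ref{cor:smoothsmallscale} directly from Theorem~\ref{thm:smoothed} through the distributional identity \eqref{eq:2.5}. That identity gives $\E[H_{\psi}(T^{-1/2})]=\E[\Phi_{\psi}(T)]$ for every $T>0$. The observable $\Phi_{\psi}(T)$ is nonnegative and, by \eqref{eq:2.6}, bounded by a constant times the sausage area, so its expectation is finite. Consequently the identity holds at the level of expectations, not only in law. Substituting $T=\varepsilon^{-2}$ is a monotone bijection between $\varepsilon\downarrow0$ and $T\to\infty$, so liminf and limsup as $\varepsilon\downarrow0$ coincide with the corresponding limits as $T\to\infty$.

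The next step is to compare normalizations. With $T=\varepsilon^{-2}$ we have $\log T=2\log(1/\varepsilon)$, and hence
\[
\frac{T}{(\log T)^{2}}=\frac{1}{4}\,\frac{\varepsilon^{-2}}{(\log(1/\varepsilon))^{2}}.
\]
This is an exact identity, not merely an asymptotic one. Therefore
\[
\frac{\E[H_{\psi}(\varepsilon)]}{\varepsilon^{-2}/(\log(1/\varepsilon))^{2}}=\frac{1}{4}\,\frac{\E[\Phi_{\psi}(\varepsilon^{-2})]}{T/(\log T)^{2}}\Big|_{T=\varepsilon^{-2}}.
\]
Taking liminf and limsup and applying \eqref{eq:5.4} gives \eqref{eq:5.7}, with constants $c_{-}/4$ and $c_{+}/4$. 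These are renamed $c_{-}$ and $c_{+}$. They remain absolute constants, and the ordering $c_{-}\le c_{+}$ is preserved.

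I do not expect a genuine obstacle in this argument. The only points that need care are the following. First, one must justify passing from equality in law to equality of expectations; nonnegativity of $\beta_{1}$ and $\psi$ settles this. Second, one must check that the rescaling $r\mapsto\varepsilon r$ inside $H_{\psi}$ agrees with \eqref{eq:2.4}. This follows from
\[
K_{T}^{(r)}\eqd\sqrt{T}\,K_{1}^{(r/\sqrt{T})}=\sqrt{T}\,K_{1}^{(\varepsilon r)},
\]
combined with the dilation invariance of $\beta_{1}$. Since this equality in law holds jointly in $r$, it gives the identity for the integral over $r$ as well. Alternatively, one can sidestep joint laws entirely: apply Fubini to get $\E[H_{\psi}(\varepsilon)]=\int\E[\beta_{1}(K_{1}^{(\varepsilon r)})]\psi(r)\,dr$, and then use the fixed-$r$ marginal identity.
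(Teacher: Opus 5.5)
Your proposal is correct and follows the paper's route: the paper also obtains the corollary from Theorem~\ref{thm:smoothed} via the identity \eqref{eq:2.5} with $T=\varepsilon^{-2}$. The paper records the normalization relation only as $\asymp$; you make explicit that $T/(\log T)^{2}=\tfrac14\,\varepsilon^{-2}/(\log(1/\varepsilon))^{2}$ holds exactly, so the constants are simply divided by $4$, and you also justify passing from equality in law to equality of expectations via nonnegativity.
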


This is the most natural small-scale formulation of the smoothed expectation theorem in the recurrent zero-drift setting.

\subsection{Mean local intensity on the Betti-curve class}\label{subsec:meanintensity}

In the exact-constant case, Corollary~\ref{cor:exactconst} yields a natural mean local intensity on the Betti-curve test class.

Let $\phi_{\psi}(b,d)=\int_{b}^{d}\psi(r)\,dr$ be the test function associated with $\psi$. If the constant $\kappa$ in \eqref{eq:5.5} exists, define
\[
\Lambda(\phi_{\psi}):=\kappa\int_{r_{0}}^{r_{1}}\frac{\psi(r)}{r^{2}}\,dr.
\]
Then Corollary~\ref{cor:exactconst} may be rewritten as
\[
\frac{\E[\Phi_{\psi}(T)]}{T/(\log T)^{2}}\to\Lambda(\phi_{\psi}).
\]
This is the sense in which the recurrent planar model admits a mean local persistence intensity near the origin of the birth--death plane. We emphasize, however, that the conclusion is only formulated on the Betti-curve test class. It is not a full local weak-limit theorem for persistence diagram measures.

\subsection{Remarks on signed test functions}\label{subsec:signed}

The main theorem of the section is stated for nonnegative weights $\psi$, because the two-sided fixed-radius theorem is itself a positive statement and because this is the natural class for the expectation asymptotics derived from Theorem~\ref{thm:fixedradius}.

If $\psi$ is signed, one may decompose
\[
\psi=\psi_{+}-\psi_{-},\qquad\psi_{\pm}\ge0,
\]
and write
\[
\Phi_{\psi}(T)=\Phi_{\psi_{+}}(T)-\Phi_{\psi_{-}}(T).
\]
Then Theorem~\ref{thm:smoothed} yields two-sided control separately on $\E[\Phi_{\psi_{+}}(T)]$ and $\E[\Phi_{\psi_{-}}(T)]$. This is enough for many purposes, but it does not lead to a single clean liminf--limsup statement unless one already knows the exact constant. For this reason, we keep the main expectation theorem in the nonnegative setting.

\section{Open problems}\label{sec:open}

\subsection{Sharp constant and exact expectation law}\label{subsec:open1}

The first remaining question is the matching of the lower and upper constants in the fixed-radius theorem. By Proposition~\ref{prop:scalingreduction}, any exact asymptotic must involve a single absolute constant multiplying $r^{-2}T/(\log T)^{2}$. The Gauss--Bonnet route discussed in Section~\ref{sec:fixedradius} suggests that the sharp constant should be determined by the first-order curvature contribution of the small-radius Wiener sausage. The missing step is the control of a defect distribution supported on the critical radii of the distance function. Once this constant is identified, Section~\ref{sec:smoothed} sharpens immediately from a two-sided logarithmic estimate to an exact smoothed expectation law and the mean local intensity on the Betti-curve class becomes explicit. Alternatively, one may try to identify the same constant through a critical-window component-counting theorem for the complement of the small-radius planar Brownian sausage.

Appendix~\ref{secA1} gives a more explicit decomposition of the sharp-constant problem into a Le Gall--Werner one-pocket term and a residual splitting term.

\subsection{Toward a law in probability and further limits}\label{subsec:open2}

The next major problem is probabilistic stabilization. The expectation theorem does not imply convergence in probability after normalization by $T/(\log T)^{2}$, and even the deterministic character of the first-order random limit is currently unknown. Unlike the drifted model, the recurrent case offers no cycle decomposition into almost independent contributions. Any probability-level law, and a fortiori any almost-sure theorem or central limit theorem, will therefore require a genuinely new recurrent mechanism, most plausibly based on local times, renormalized self-intersection local times, or a scale-by-scale decomposition of the unit-time trace near the origin.

\subsection{Beyond the Betti-curve class and the broader program}\label{subsec:open3}

The present results are formulated on the Betti-curve test class because that class is stable under the scaling structure of the problem and reduces the two-parameter persistence diagram to fixed-radius hole counts. A deeper question is whether one can enlarge this class while retaining a workable recurrent theory, or even describe a local random persistence measure near the origin of the birth--death plane. More broadly, the contrast between the drifted and recurrent planar papers suggests that persistent-topological limit theorems for continuous stochastic paths will not rest on a single universal mechanism: fresh-space regeneration governs the drifted model, while small-scale self-interaction appears to govern the recurrent one. Extending this program to other planar diffusions, self-similar processes, and higher-dimensional models remains a central direction for future work.

\backmatter

\bmhead{Acknowledgments}

Not applicable.

\section*{Declarations}

\noindent\textbf{Funding.} Not applicable.

\noindent\textbf{Conflict of interest.} The author declares no competing interests.

\noindent\textbf{Data availability.} Not applicable.

\begin{appendices}

\section{A component-level formulation of the sharp constant problem}\label{secA1}

This appendix records a component-level decomposition of the sharp-constant problem left open. Its purpose is not to prove the sharp asymptotic, but to isolate the two geometric effects that would have to be controlled in order to identify the constant.

Let $K_{1}$ be the unit-time planar Brownian trace, $\left(A_{i}\right)_{i\ge1}$ be the bounded connected components of $\R^{2}\setminus K_{1}$, ordered by nonincreasing area, and $\rho_{K_{1}}\left(A_{i}\right)$ denote the inradius of a component $A_{i}$ relative to the Brownian trace, exactly as in Section~\ref{sec:fixedradius}.

For $\varepsilon>0$, define the inner $\varepsilon$-pocket of $A_{i}$ by
\[
A_{i}^{(\varepsilon)}=\{x\in A_{i}:\dist\left(x,K_{1}\right)>\varepsilon\},
\]
and let
\[
m_{i}(\varepsilon)=\#\{\text{connected components of }A_{i}^{(\varepsilon)}\}.
\]
Thus $m_{i}(\varepsilon)=0$ if $\rho_{K_{1}}\left(A_{i}\right)\le\varepsilon$, while $m_{i}(\varepsilon)\ge1$ if $\rho_{K_{1}}\left(A_{i}\right)>\varepsilon$. Since the bounded connected components of $\R^{2}\setminus K_{1}^{(\varepsilon)}$ are precisely the connected components of the sets $A_{i}^{(\varepsilon)}$, we have the exact identity
\[
\beta_{1}\left(K_{1}^{(\varepsilon)}\right)=\sum_{i\ge1}m_{i}(\varepsilon).
\]
Equivalently,
\[
\beta_{1}\left(K_{1}^{(\varepsilon)}\right)=N_{\mathrm{one}}(\varepsilon)+N_{\mathrm{split}}(\varepsilon),
\]
where
\[
N_{\mathrm{one}}(\varepsilon)=\sum_{i\ge1}\mathbf{1}_{\{\rho_{i}>\varepsilon\}}
\]
counts the Brownian complementary components that contain at least one surviving inner pocket, and
\[
N_{\mathrm{split}}(\varepsilon)=\sum_{i\ge1}(m_{i}(\varepsilon)-1)_{+}
\]
counts the additional pockets created when a single Brownian complementary component contributes more than one connected component after inward erosion.

By Brownian scaling, the sharp fixed-radius asymptotic
\[
\E\bigl[\beta_{1}\left(K_{T}^{(r)}\right)\bigr]\sim\frac{\kappa}{r^{2}}\frac{T}{(\log T)^{2}},\qquad T\to\infty,
\]
is equivalent to the unit-time small-radius asymptotic
\[
\E\bigl[\beta_{1}\left(K_{1}^{(\varepsilon)}\right)\bigr]\sim\frac{\kappa}{4}\frac{1}{\varepsilon^{2}\left(\log(1/\varepsilon)\right)^{2}},\qquad\varepsilon\downarrow0.
\]
Thus identifying the fixed-radius constant $\kappa$ is equivalent to identifying the small-scale constant
\[
\kappa_{0}=\lim_{\varepsilon\downarrow0}\varepsilon^{2}\left(\log(1/\varepsilon)\right)^{2}\,\E\bigl[\beta_{1}\left(K_{1}^{(\varepsilon)}\right)\bigr],
\]
in which case
\[
\kappa=4\kappa_{0}.
\]
The preceding decomposition shows that, if the two limits exist,
\[
c_{\mathrm{one}}=\lim_{\varepsilon\downarrow0}\varepsilon^{2}\left(\log(1/\varepsilon)\right)^{2}\,\E[N_{\mathrm{one}}(\varepsilon)]
\]
and
\[
c_{\mathrm{split}}=\lim_{\varepsilon\downarrow0}\varepsilon^{2}\left(\log(1/\varepsilon)\right)^{2}\,\E[N_{\mathrm{split}}(\varepsilon)],
\]
then
\[
\kappa_{0}=c_{\mathrm{one}}+c_{\mathrm{split}},
\]
and therefore
\[
\kappa=4\left(c_{\mathrm{one}}+c_{\mathrm{split}}\right).
\]
The first term is the part naturally governed by Le Gall's area asymptotics and Werner's limiting shape law. Indeed, $N_{\mathrm{one}}(\varepsilon)$ counts those complementary components whose inradius exceeds $\varepsilon$. Since Le Gall's theorem controls the asymptotic distribution of the component areas, while Werner's theorem controls the empirical distribution of their normalized shapes, this term corresponds to the ``one surviving pocket per component'' contribution.

The second term is the genuinely critical contribution. It measures the additional connected components produced when the inner parallel set $A_{i}^{(\varepsilon)}$ fragments into several pockets. Thus $N_{\mathrm{split}}(\varepsilon)$ records the effect of narrow necks, medial-axis structure, and near-critical inward erosions of Brownian complementary components.

In this form, the sharp-constant problem separates into two tasks:

(i) identify the one-pocket constant $c_{\mathrm{one}}$ from the Le Gall--Werner component-area and shape theory;

(ii) identify the splitting constant $c_{\mathrm{split}}$, which requires a critical-window analysis of the inward erosions $A_{i}^{(\varepsilon)}$.

This decomposition is parallel to the Gauss--Bonnet route discussed in Section~\ref{sec:open}. The Euler identity relates $\beta_{1}\left(K_{1}^{(\varepsilon)}\right)$ to the Euler characteristic of the sausage, while a distributional Gauss--Bonnet formula would express the latter through curvature contributions and possible defect terms at critical radii of the distance function. In the component formulation above, the same obstruction appears as $N_{\mathrm{split}}(\varepsilon)$.

Thus the missing step in the sharp-constant problem may be viewed either as a Gauss--Bonnet defect problem or as a splitting-density problem for inward eroded Brownian complementary components. The results of the main text do not rely on resolving this issue; they use only the two-sided fixed-radius estimates established in Section~\ref{sec:fixedradius}.

\begin{figure}[H]
\centering
\includegraphics[width=\textwidth]{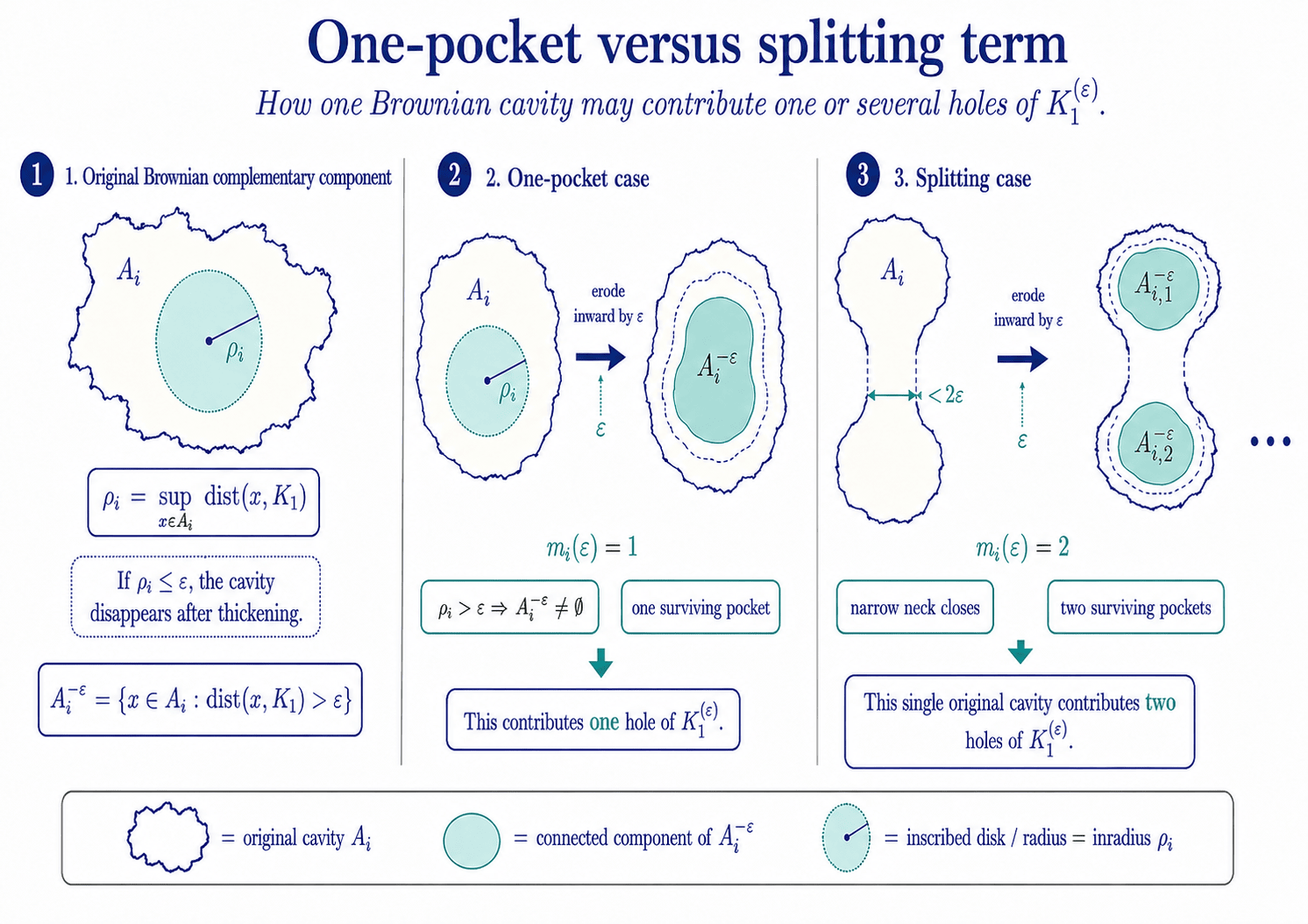}
\caption{\textbf{One-pocket and splitting contributions.} The inner $\varepsilon$-pocket $A_{i}^{(\varepsilon)}=\{x\in A_{i}:\dist(x,K_{1})>\varepsilon\}$ may have one or several connected components. The one-pocket term counts components that survive erosion, while the splitting term counts additional pockets created by narrow necks and near-critical inward erosions.}\label{fig:pocket}
\end{figure}

\end{appendices}


\begin{thebibliography}{24}

\bibitem{ref1}
Y. Baryshnikov, \emph{Brownian motions, persistent homology and chirality}, \textbf{Journal of Applied and Computational Topology} 9 (2025), article 24.

\bibitem{ref2}
D. Cohen-Steiner, H. Edelsbrunner, and J. Harer, \emph{Stability of persistence diagrams}, \textbf{Discrete \& Computational Geometry} 37 (2007), no.~1, 103--120.

\bibitem{ref3}
M.D. Donsker and S.R.S. Varadhan, \emph{Asymptotics for the Wiener sausage}, \textbf{Communications on Pure and Applied Mathematics} 28 (1975), 525--565.

\bibitem{ref4}
H. Edelsbrunner, D. Letscher, and A. Zomorodian, \emph{Topological persistence and simplification}, \textbf{Discrete \& Computational Geometry} 28 (2002), no.~4, 511--533.

\bibitem{ref5}
H. Federer, \emph{Curvature measures}, \textbf{Transactions of the American Mathematical Society} 93 (1959), 418--491.

\bibitem{ref6}
J. H. G. Fu, \emph{Tubular neighborhoods in Euclidean spaces}, \textbf{Duke Mathematical Journal} 52 (1985), no.~4, 1025--1046.

\bibitem{ref7}
T. Guillaume, \emph{Persistence of the Wiener Sausage: Sampling Stability and a Law of Large Numbers for Drifted Planar Brownian Motion}, arXiv:2604.03130 [math.PR], (2026).

\bibitem{ref8}
T. Guillaume, \emph{Persistent Homology of the Wiener Sausage II: A Central Limit Theorem for Drifted Planar Brownian Motion}, arXiv:2604.20327 [math.PR], (2026).

\bibitem{ref9}
Y. Hiraoka, T. Shirai, and K. D. Trinh, \emph{Limit theorems for persistence diagrams}, \textbf{Annals of Applied Probability} 28 (2018), no.~5, 2740--2780.

\bibitem{ref10}
N. Holden, \c{S}. Nacu, Y. Peres, and T. Salisbury, \emph{How round are the complementary components of planar Brownian motion?}, \textbf{Annales de l'Institut Henri Poincar\'e, Probabilit\'es et Statistiques} 55 (2019), no.~2, 891--913.

\bibitem{ref11}
O. Honzl, \emph{On an upper bound of the Euler characteristic of the Wiener sausage}, \textbf{Methodology and Computing in Applied Probability} 16 (2014), no.~2, 331--353.

\bibitem{ref12}
D. Hug, G. Last, and W. Weil, \emph{A local Steiner-type formula for general closed sets and applications}, \textbf{Mathematische Zeitschrift} 246 (2004), no.~1--2, 237--272.

\bibitem{ref13}
G. Last, \emph{On mean curvature functions of Brownian paths}, \textbf{Stochastic Processes and their Applications} 116 (2006), no.~12, 1876--1891.

\bibitem{ref14}
J.-F. Le Gall, \emph{Wiener sausage and self-intersection local times}, \textbf{Journal of Functional Analysis} 88 (1990), no.~2, 299--341.

\bibitem{ref15}
J.-F. Le Gall, \emph{Some properties of planar Brownian motion}, in: \'Ecole d'\'Et\'e de Probabilit\'es de Saint-Flour XX---1990, Lecture Notes in Mathematics 1527, Springer, Berlin, 1992, pp.~111--235.

\bibitem{ref16}
J.-F. Le Gall, \emph{On the connected components of the complement of a two-dimensional Brownian path}, in: Random Walks, Brownian Motion, and Interacting Particle Systems, Progress in Probability, vol.~28, Birkh\"auser Boston, 1991, pp.~323--338.

\bibitem{ref17}
T. S. Mountford, \emph{On the asymptotic number of small components created by planar Brownian motion}, \textbf{Stochastics and Stochastics Reports} 28 (1989), no.~3, 177--188.

\bibitem{ref18}
D. Perez, \emph{On the persistent homology of almost surely $C^{0}$ stochastic processes}, \textbf{Journal of Applied and Computational Topology} 7 (2023), 611--650.

\bibitem{ref19}
J. Rataj, D. Meschenmoser, and E. Spodarev, \emph{Approximations of the Wiener sausage and its curvature measures}, \textbf{Annals of Applied Probability} 19 (2009), no.~5, 1840--1859.

\bibitem{ref20}
J. Rataj, V. Schmidt, and E. Spodarev, \emph{On the expected surface area of the Wiener sausage}, \textbf{Mathematische Nachrichten} 282 (2009), no.~4, 591--603.

\bibitem{ref21}
F. Spitzer, \emph{Electrostatic capacity, heat flow and Brownian motion}, \textbf{Z. Wahrscheinlichkeitstheorie Verw. Gebiete} 3 (1964), 110--121.

\bibitem{ref22}
L. L. Stach\'o, \emph{On the volume function of parallel sets}, \textbf{Acta Scientiarum Mathematicarum (Szeged)} 38 (1976), 365--374.

\bibitem{ref23}
A. M. Thomas, \emph{Convergence of persistence diagrams for discrete time stationary processes}, \textbf{Journal of Applied and Computational Topology} 9 (2025), 177--222.

\bibitem{ref24}
W. Werner, \emph{Sur la forme des composantes connexes du compl\'ementaire de la courbe brownienne plane}, \textbf{Probability Theory and Related Fields} 98 (1994), no.~3, 307--337.

\end{thebibliography}
\end{document}